\newif\ifdetails
\newcommand{\DETAIL}[1]%
{\ifdetails\par\fbox{\begin{minipage}{0.9\linewidth}\textit{Detail:}
      #1\end{minipage}}\par\fi}
\newcommand{\TODO}[1]%
{\ifdetails\par\fbox{\begin{minipage}{0.9\linewidth}\textbf{TODO:}
      #1\end{minipage}}\par\fi}
\newtheorem{lemma}{Lemma}
\newtheorem{proposition}[lemma]{Proposition}
\newtheorem{theorem}[lemma]{Theorem}
\newtheorem{corollary}[lemma]{Corollary}
\theoremstyle{remark}
\newtheorem{conjecture}{Conjecture}
\newtheorem{question}{Question}
\newtheorem{problem}{Problem}
\newcommand{\old}[1]{{}}
\title{Inducibility of $d$-ary trees}
\author[\'E. Czabarka, A. A. V. Dossou-Olory, L. A. Sz\'ekely]{\'Eva Czabarka, Audace A. V. Dossou-Olory, L\'aszl\'o A. Sz\'ekely}
\address{\'Eva Czabarka and L\'aszl\'o A. Sz\'ekely\\ Department of Mathematics \\ University of South Carolina \\ Columbia, SC 29208 \\ USA}
\email{\{czabarka,szekely\}@math.sc.edu }
\thanks{The second author was supported by Stellenbosch University and African Institute for Mathematical Sciences (AIMS) South Africa, the third author was supported in part by the  NSF DMS,  grant number 1600811, the fourth author was supported by the National
Research Foundation of South Africa, grant number 96236.}
\author[S. Wagner]{\\Stephan Wagner}
\address{Audace A. V. Dossou-Olory and Stephan Wagner\\ Department of Mathematical Sciences \\ Stellenbosch University \\ Private Bag X1, Matieland 7602 \\ South Africa}
\email{\{audaced,swagner\}@sun.ac.za}
\subjclass[2010]{Primary 05C05; secondary 05C07, 05C30, 05C35}
\keywords{$d$-ary trees, leaf-induced subtrees, stars, strictly $d$-ary trees, inducibility, maximum density, caterpillars}
\begin{document}

\begin{abstract}
Imitating a recently introduced invariant of trees, we initiate the study of the inducibility of $d$-ary trees (rooted trees whose vertex outdegrees are bounded from above by $d\geq 2$) with a given number of leaves. We determine the exact inducibility for stars and binary caterpillars. For $T$ in the family of strictly $d$-ary trees (every vertex has $0$ or $d$ children), we prove that the difference between the maximum density of a $d$-ary tree $D$ in $T$ and the inducibility of $D$ is of order $\mathcal{O}(|T|^{-1/2})$ compared to the general case where it is shown that the difference is $\mathcal{O}(|T|^{-1})$ which, in particular, responds positively to an existing conjecture on the inducibility in binary trees. We also discover that the inducibility of a binary tree in $d$-ary trees is independent of $d$. Furthermore, we establish a general lower bound on the inducibility and also provide a bound for some special trees. Moreover, we find that the maximum inducibility is attained for binary caterpillars for every $d$.
\end{abstract}

\maketitle

\section{Introduction}

The inducibility of graphs is an old topic in extremal graph theory. However, the inducibility of trees with a given number of leaves is a novel concept that has been put forward recently in \cite{czabarka2016inducibility} to deal with a typical problem stemming from the phylogenetics context in mathematical biology. In a compact way, the problem that is addressed in \cite{czabarka2016inducibility} can be stated as maximizing the asymptotic density of appearances of `small' binary trees in binary trees with `large' number of leaves.

The present work is aimed at extending the concept from binary trees to rooted trees with bounded degrees in which no vertex has degree $2$ (except possibly for the root).

\medskip
The problem of maximizing the density of graphs in larger graphs is a key concept that has been explored starting from the work \cite{pippenger1975inducibility} of Golumbic and Pippenger and has ever since been of interest to graph theorists. In \cite{pippenger1975inducibility}, the initiators introduced the subject for simple graphs: for finite and simple graphs $G$ and $H$ with $k$ and $n$ vertices, respectively, let $\mathcal{I}(G,H)$ be the number of distinct subgraphs induced by $k$ distinct vertices of $H$ which are isomorphic to $G$. The limit
\begin{align*}
I(G):=\lim_{n\to \infty} \Bigg( \max_{|H|=n}\mathcal{I}(G,H)/\binom{n}{k}\Bigg),
\end{align*}
where the maximum runs over all finite and simple graphs on $n\geq k$ vertices, always exists and is called the \textit{inducibility} of the graph $G$. In this setting, it is the asymptotic behavior of the maximum number of appearances of $G$ as subgraph in an arbitrary $n$-vertex graph as $n\to \infty$ which is captured by the inducibility $I(G)$.

\medskip
There have been numerous investigations on $I(G)$ in special cases. Indeed, the maximum number of induced subgraphs of $H$ isomorphic to the complete bipartite graph $K_{k,k}$ has been studied in a 1986 paper \cite{bollobas1986maximal} by Bollob\'{a}s et al. A great deal of work (see \cite{exoo1986dense, brown1994inducibility, bollobas1995maximal}) followed \cite{bollobas1986maximal} afterwards. As such, Brown and Sidorenko~\cite{brown1994inducibility} computed  explicitly $I(K_{k,k+l})$ for all $l\geq 1$ but under the restriction $k \geq l(l-1)/2$. For $k < l(l-1)/2$, they stated the result as a function of the maximum over $[0,1]$ of a certain polynomial in a single variable.

In 2014, James Hirst \cite{hirst2014inducibility} determined, employing Razborov's flag algebra method and semi-definite programming techniques, the inducibility of two 4-vertex graphs: the complete tripartite graph $K_{1,1,2}$ and the so-called paw graph (graph constructed from a triangle by appending a pendant edge). The concept of inducibility is still gaining consideration from several research groups; see \cite{hatami2014inducibility} and \cite{even2015note} for some recent results on blow-up of graphs and graphs on four vertices, respectively. The language of flag algebra was also employed recently in \cite{balogh2016maximum} to derive the inducibility of the cycle on five vertices, thereby settling a particular case of a conjecture formulated in \cite{pippenger1975inducibility}. 

\medskip
Let us mention that there has also been interest in oriented graphs. For instance, in 2011, Sperfeld \cite{sperfeld2011inducibility} explored, by means of flag algebra, the inducibility of (monodirected) graphs with at most four vertices. Three years later, Huang \cite{huang2014maximum} determined, by means of a different approach, the maximum induced proportion of directed star graphs.

\section{Inducibility of Trees}

One of the most widely used classes of graphs is the class of trees. Due to their prevalence in many situations, the study of the inducibility of trees emerged in 2016, and has been addressed in two different settings.

In \cite{bubeck2016local}, Bubeck and Linial investigated what they called \textit{$k$-profile of trees}, which is the vector of induced proportions of trees with $k$ vertices classified according to their isomorphism type. At the end of their paper, Bubeck and Linial briefly defined a notion of inducibility of trees in this specific situation. Analogously, originally motivated by a question from phylogenetics, the recent paper \cite{czabarka2016inducibility} introduced a further variant of the concept by studying the inducibility for rooted binary trees with a given number of leaves.

\medskip
A natural related problem for further study is the inducibility of degree-bounded rooted trees. The special case of binary trees is also known as phylogenetic trees---these trees are widely used in biology to describe how species are evolutionarily linked \cite{hafner1988phylogenetic,semple2003phylogenetics}. The present work aims at producing a number of results on the inducibility of degree-bounded rooted trees. It also provides an affirmative answer to a conjecture from \cite{czabarka2016inducibility}.

\medskip
Let $d\geq 2$ be an arbitrary but fixed positive integer. A rooted tree will be called a \textit{$d$-ary tree} in this paper if each of its non-leaf vertices has between $2$ and $d$ children. For a rooted tree $D$, we write $|D|$ for the number of leaves of $D$. 

If $S$ is a subset of the leaf set of a $d$-ary tree $T$, then the unique subtree obtained by
\begin{itemize}
\item first extracting the minimal subtree of $T$ containing the leaves in $S$, and then 
\item deleting, except possibly for the root, the vertices of degree $2$ in the induced tree 
\end{itemize}
is called a \textit{leaf-induced subtree} of $T$, see Figure~\ref{leaf-induced}. We define the root of the resulting tree as the most recent common ancestor of the leaves in $S$. By a \textit{copy} of $D$ in $T$, we mean any leaf-induced subtree of $T$ isomorphic (in the sense of rooted trees) to $D$. The total number of copies of $D$ in $T$ will be denoted by $c(D,T)$, and the quotient 
\begin{align*}
\frac{c(D,T)}{\binom{|T|}{|D|}}
\end{align*}
by $\gamma(D,T)$. In words, $\gamma(D,T)$ is the proportion of all subsets of $|D|$ leaves of $T$ that induce a copy of $D$.

\begin{figure}[!h]\centering  
\begin{tikzpicture}[thick]
\node [circle,draw] (r) at (0,0) {};

\draw (r) -- (-2,-2);
\draw (r) -- (0,-4);
\draw (r) -- (2,-2);
\draw (-2,-2) -- (-2.5,-4);
\draw (-2,-2) -- (-1.5,-4);
\draw (2,-2) -- (1,-4);
\draw (2,-2) -- (2,-4);
\draw (2,-2) -- (3,-4);
\draw (1.25,-3.5) -- (1.5,-4);

\node [fill,circle, inner sep = 2pt ] at (-2.5,-4) {};
\node [fill,circle, inner sep = 2pt ] at (-1.5,-4) {};
\node [fill,circle, inner sep = 2pt ] at (0,-4) {};
\node [fill,circle, inner sep = 2pt ] at (1,-4) {};
\node [fill,circle, inner sep = 2pt ] at (1.5,-4) {};
\node [fill,circle, inner sep = 2pt ] at (2,-4) {};
\node [fill,circle, inner sep = 2pt ] at (3,-4) {};

\node at (-1.5,-4.5) {$\ell_1$};
\node at (0,-4.5) {$\ell_2$};
\node at (1,-4.5) {$\ell_3$};
\node at (2,-4.5) {$\ell_4$};

\node [circle,draw] (r1) at (6,-2) {};

\draw (r1) -- (5,-4);
\draw (r1) -- (6,-4);
\draw (r1) -- (7,-4);
\draw (6.75,-3.5)--(6.5,-4);

\node [fill,circle, inner sep = 2pt ] at (5,-4) {};
\node [fill,circle, inner sep = 2pt ] at (6,-4) {};
\node [fill,circle, inner sep = 2pt ] at (6.5,-4) {};
\node [fill,circle, inner sep = 2pt ] at (7,-4) {};

\node at (5,-4.5) {$\ell_1$};
\node at (6,-4.5) {$\ell_2$};
\node at (6.5,-4.5) {$\ell_3$};
\node at (7,-4.5) {$\ell_4$};

\end{tikzpicture}
\caption{A ternary tree and the subtree induced by four leaves ($\ell_1,\ell_2,\ell_3,\ell_4$).}\label{leaf-induced}
\end{figure}
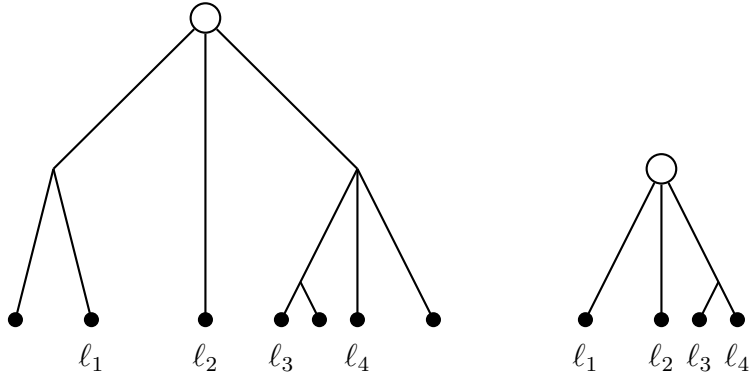

We define the \textit{inducibility of a $d$-ary tree $D$ in $d$-ary trees} $T$ as being the limit superior, taken over all $d$-ary trees, of the density of all subsets of $|D|$ leaves of $T$ that induce a copy of $D$:
\begin{align}\label{Formula of the i(B)}
I_d(D):= \limsup_{\substack{|T|\to \infty \\ T~\text{$d$-ary tree}}} \gamma(D,T) \, ,
\end{align}
where we mean 
$$
\limsup_{\substack{|T|\to \infty \\ T~\text{$d$-ary tree}}} \gamma(D,T)= \limsup_{n\rightarrow \infty} \max_{\substack{|T|=n \\ T~\text{$d$-ary tree}}} \gamma(D,T).
$$
This novel concept of inducibility is, of course, in analogy to the very first one \cite{pippenger1975inducibility} for simple graphs. However, it is not clear at this point whether the sequence
\begin{align*}
\Big(\max_{\substack{|T|=n \\ T~\text{$d$-ary tree}}}\gamma(D,T)\Big)_{n\geq 1}
\end{align*} 
converges for every $d$-ary tree $D$, in which case we could simply write
\begin{align*}
I_d(D)=\lim_{n\to \infty} \max_{\substack{|T|=n \\ T~\text{$d$-ary tree}}}\gamma(D,T)\,.
\end{align*} 

But notice that equation \eqref{Formula of the i(B)} already tells us that the maximum number of copies of $D$ in an arbitrary $n$-leaf $d$-ary tree is at most 
\begin{align*}
\binom{n}{|D|} \big(I_d(D) +o(1)\big)
\end{align*}
as $n\to \infty$. In fact, we shall prove later that $I_d(D)$ is always a positive real number for every $d$-ary tree $D$---see Proposition~\ref{howsamllJS}.

For the particular case where $d=2$, it was conjectured in \cite{czabarka2016inducibility} that the asymptotic formula
\begin{align} \label{errorterm1}
\max_{\substack{|T|=n\\ T~\text{binary tree}}}\gamma(B,T) =I_2(B) +\mathcal{O}(n^{-1})
\end{align} 
holds for every binary tree $B$. In the present work, we affirm this conjecture, and even generalize the result
 for every $d$ (Theorem~\ref{maxdensityId}).

\medskip
A $d$-ary tree in which every vertex has exactly $0$ or $d$ children will be called a \textit{strictly} $d$-ary tree. It turns out that $I_d(D)$ can also be computed by merely taking the limit superior in the family of strictly $d$-ary trees. 

Define the \textit{inducibility of a $d$-ary tree $D$ in strictly $d$-ary trees} as being the limit superior, taken over all strictly $d$-ary trees, of the density of subsets of $|D|$ leaves that induce a copy of $D$. From now we always assume that $d\geq 2$ and $|D|\geq 2$.
That is, we have
\begin{align*}
i_d(D):= \limsup_{\substack{|T|\to \infty \\T~\text{strictly $d$-ary tree}}} \gamma(D,T)\,.
\end{align*}
We note that $0\leq i_d(D)\leq I_d(D)\leq 1$ by definition.
We demonstrate in Section~\ref{IndVsMax} that the identity $I_d(D)=i_d(D)$ holds for every $d$-ary tree $D$ and every $d$---see Theorem~\ref{towards idId}. 
As counterpart of \eqref{errorterm1}, we show in Corollary~\ref{lowerBoundAsymptid} that
\begin{align*}
\max_{\substack{|T|=n\\T~\text{strictly $d$-ary tree}}} \gamma(D,T) = i_d(D)+ \mathcal{O}(n^{-1/2}).
\end{align*}
In Section~\ref{bin} we show that inducibility of any fixed binary tree is the same among binary trees and $d$-ary trees, for any $d\geq 2$,
in Section~\ref{generic} we show that the inducibility of any $d$-ary tree is positive, and finally in Section~\ref{maxim} we show that the only $d$-ary trees that have inducibility 1 are the binary caterpillars.
 
\section{On Stars and Binary Caterpillars}

For technical reasons, it is consistent and convenient to treat the single vertex as both leaf and root. By joining leaves to a single vertex, one obtains a \textit{star}. We denote the $k$-leaf star by $C_k$ as suggested by Fig.~\ref{Stars Ck.}. Obviously, our definitions bring
$I_d(C_2)=i_d(C_2)=1$
for every $d$. 

\begin{figure}[htbp]\centering
%2    
  \begin{subfigure}[b]{0.2\textwidth} \centering  
\begin{tikzpicture}[thick,level distance=7mm]
\tikzstyle{level 1}=[sibling distance=10mm]
\node [circle,draw]{}
child {[fill] circle (2pt)}
child {[fill] circle (2pt)};
\end{tikzpicture}
  \caption{$C_2$}
  \end{subfigure}\quad
%3      
  \begin{subfigure}[b]{0.2\textwidth} \centering 
\begin{tikzpicture}[thick,level distance=10mm]
\tikzstyle{level 1}=[sibling distance=8mm]
\node [circle,draw]{}
child {[fill] circle (2pt)}
child {[fill] circle (2pt)}
child {[fill] circle (2pt)};
\end{tikzpicture}
  \caption{$C_3$}
  \end{subfigure}\qquad
%4
   \begin{subfigure}[b]{0.2\textwidth} \centering   
\begin{tikzpicture}[thick,level distance=14mm]
\tikzstyle{level 1}=[sibling distance=8mm]
\node [circle,draw]{}
child {[fill] circle (2pt)}
child {[fill] circle (2pt)}
child {[fill] circle (2pt)}
child {[fill] circle (2pt)};
\end{tikzpicture}
   \caption{$C_4$}
  \end{subfigure}\qquad
%5    
   \begin{subfigure}[b]{0.2\textwidth} \centering  
\begin{tikzpicture}[thick,level distance=16mm]
\tikzstyle{level 1}=[sibling distance=6mm]
\node [circle,draw]{}
child {[fill] circle (2pt)}
child {[fill] circle (2pt)}
child {[fill] circle (2pt)}
child {[fill] circle (2pt)}
child {[fill] circle (2pt)};
\end{tikzpicture}
  \caption{$C_5$}
  \end{subfigure}
\caption{Stars $C_k$.} \label{Stars Ck.}
\end{figure}
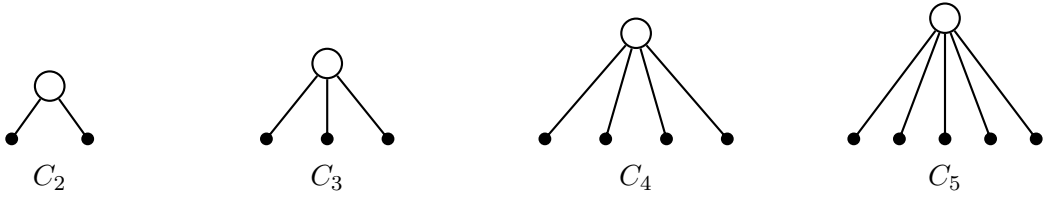

A \textit{complete} $d$-ary tree is a strictly $d$-ary tree in which all the leaves ($d^h$ in total) are at the same distance $h$ from the root. See Fig.~\ref{The complete 4 ary tree of height $2$.} for the complete $4$-ary tree of height $2$.

\begin{figure}[!h]\centering  
\begin{tikzpicture}[thick,level distance=16mm]
\tikzstyle{level 1}=[sibling distance=27mm]
\tikzstyle{level 2}=[sibling distance=6mm]
\node [circle,draw]{}
child {child {[fill] circle (2pt)}child {[fill] circle (2pt)}child {[fill] circle (2pt)}child {[fill] circle (2pt)}}
child {child {[fill] circle (2pt)}child {[fill] circle (2pt)}child {[fill] circle (2pt)}child {[fill] circle (2pt)}}
child {child {[fill] circle (2pt)}child {[fill] circle (2pt)}child {[fill] circle (2pt)}child {[fill] circle (2pt)}}
child {child {[fill] circle (2pt)}child {[fill] circle (2pt)}child {[fill] circle (2pt)}child {[fill] circle (2pt)}};
\end{tikzpicture}
\caption{The complete $4$-ary tree of height $2$.}\label{The complete 4 ary tree of height $2$.}
\end{figure}
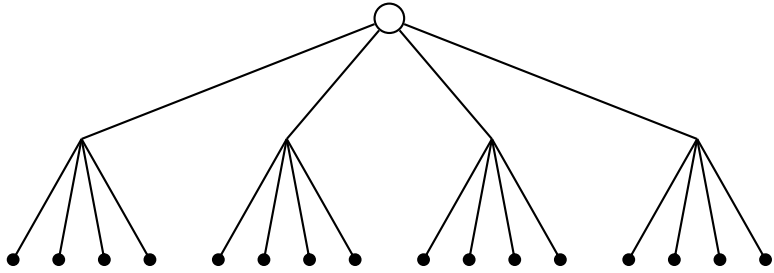

For our next theorem, which characterizes the inducibility of stars, we need Muirhead's Inequality \cite{hardy1952inequalities}: this inequality states that
$$\sum_{\pi \in S_d} \prod_{i = 1}^d x_{\pi(i)}^{l_i} \geq \sum_{\pi \in S_d} \prod_{i = 1}^d x_{\pi(i)}^{m_i}$$
for all nonnegative real numbers $x_1,x_2,\ldots,x_d$ whenever the vector $(l_1,l_2,\ldots,l_d)$ majorizes the vector $(m_1,m_2,\ldots,m_d)$. The sums on both sides are taken over all permutations of the indices $1,2,\ldots,d$.

\begin{theorem}\label{Cd}
For every fixed positive integer $d\geq 2$ and every $k\in \{2,3,\ldots,d\}$, the inducibility of the $k$-leaf star $C_k$ in strictly $d$-ary trees is
\begin{align*}
i_d(C_k)=\frac{d!}{(d-k)!(d^k-d)}\,.
\end{align*}
Moreover, $i_d(C_k)$ is an increasing sequence in $d$ for $k \geq 3$, starting with $d=k$.
\end{theorem}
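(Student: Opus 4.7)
A leaf-induced copy of $C_k$ in $T$ is determined by choosing an internal vertex $v$ of $T$ and then one leaf from each of $k$ distinct subtrees rooted at children of $v$. This gives
\begin{equation*}
c(C_k,T) = \sum_{v} e_k(n_{v,1}, \ldots, n_{v,d}),
\end{equation*}
where $v$ runs over the internal vertices of $T$, $n_{v,j}$ is the number of leaves in the subtree rooted at the $j$-th child of $v$, and $e_k$ is the $k$-th elementary symmetric polynomial. This identity will drive the whole argument.

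For the lower bound, I would evaluate this sum on the complete $d$-ary tree $T_h$ of height $h$ (so $n = d^h$): each of the $d^j$ vertices at depth $j$ contributes $\binom{d}{k}(d^{h-j-1})^k$, and the resulting geometric sum evaluates in closed form to $c(C_k, T_h) = \binom{d}{k}(n^k - n)/(d^k - d)$. Dividing by $\binom{n}{k}$ and letting $h \to \infty$ yields $\gamma(C_k,T_h) \to d!/((d-k)!(d^k-d))$, the value claimed.

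For the matching upper bound I would prove by induction on $n$ that
\begin{equation*}
c(C_k, T) \leq c(n^k - n), \qquad c := \tfrac{\binom{d}{k}}{d^k - d},
\end{equation*}
for every strictly $d$-ary $T$ with $n$ leaves. The base $n=1$ is trivial, and decomposing the root into $d$ subtrees of sizes $n_1, \ldots, n_d$ with $n_1 + \cdots + n_d = n$ reduces the step to
\begin{equation*}
e_k(n_1,\ldots,n_d) + c \sum_{i=1}^d n_i^k \;\leq\; c\, n^k.
\end{equation*}
Expanding $n^k$ by the multinomial theorem as $\sum_{\lambda \vdash k} \binom{k}{\lambda}\, m_\lambda(n_1, \ldots, n_d)$ and isolating the contributions of $\lambda=(k)$ and $\lambda=(1^k)$, this inequality is equivalent to
\begin{equation*}
\binom{d}{k} \sum_{\substack{\lambda \vdash k \\ \lambda \neq (k),\,(1^k)}} \binom{k}{\lambda}\, m_\lambda \;\geq\; \Bigl(d^k - d - \tfrac{d!}{(d-k)!}\Bigr)\, e_k.
\end{equation*}
Every $\lambda \vdash k$ majorizes $(1^k)$, so Muirhead's inequality yields $\mathrm{stab}(\lambda)\, m_\lambda \geq k!(d-k)!\,e_k$ for each such $\lambda$. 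Summing these lower bounds with weights $\binom{k}{\lambda}$ and using the identity $\sum_{\lambda} \binom{k}{\lambda}/\mathrm{stab}(\lambda) = d^k/d!$ (obtained by setting $n_1=\cdots=n_d=1$ in the same multinomial expansion) produces precisely the coefficient $d^k - d - d!/(d-k)!$ on the right, closing the induction. I expect this stabilizer bookkeeping to be the main technical obstacle.

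Finally, monotonicity in $d$ reduces, after cancellation in $i_{d+1}(C_k)/i_d(C_k)$, to the claim $d(d^{k-1}-1) \geq (d+1-k)((d+1)^{k-1}-1)$. A direct binomial-theorem expansion and the identity $k\binom{k-1}{j} = (k-j)\binom{k}{j}$ give
\begin{equation*}
d(d^{k-1}-1) - (d+1-k)((d+1)^{k-1}-1) = \sum_{j=1}^{k-2} (k-j-1)\binom{k}{j}\, d^j,
\end{equation*}
which is manifestly nonnegative and strictly positive for $k\geq 3$.
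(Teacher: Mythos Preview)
Your argument for the main equality $i_d(C_k)=\dfrac{d!}{(d-k)!(d^k-d)}$ is correct and is essentially the paper's proof: the same induction on $n$, the same reduction to bounding $e_k(n_1,\ldots,n_d)$ via the multinomial expansion of $n^k$, and the same application of Muirhead's inequality term by term. Your packaging via monomial symmetric polynomials $m_\lambda$ and stabilizers is a tidy reformulation, but the content and the equality case (complete $d$-ary trees) coincide with the paper's.

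Where you genuinely diverge is in the monotonicity claim. The paper treats $u_k(x)=\prod_{i=1}^{k-1}(x-i)/(x^{k-1}-1)$ as a real function and shows $\frac{d}{dx}\log u_k(x)>0$ for $x\ge k$ by an elementary estimate on the logarithmic derivative. You instead compare consecutive integer values directly and derive the exact identity
\[
d(d^{k-1}-1)-(d+1-k)\bigl((d+1)^{k-1}-1\bigr)=\sum_{j=1}^{k-2}(k-j-1)\binom{k}{j}d^{\,j},
\]
which is visibly nonnegative and strictly positive for $k\ge 3$. Both are valid; the paper's calculus argument is quicker to write, while your discrete identity is more elementary and even quantifies the increment.
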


\begin{proof}
Fix $d\geq 2$, and let us prove that for a strictly $d$-ary tree $T$ with $n$ leaves, we have
\begin{align*}
c(C_k,T)\leq \binom{d}{k} \cdot \frac{n^k-n}{d^k-d}
\end{align*}
for every $k\geq 2$, with equality if $T$ is a complete $d$-ary tree.

\medskip
For $k=2$, the assertion is immediate as $c(C_2,T)=\binom{|T|}{2}$. For $k\geq 3$, we use induction on $n$. The base cases $n<k$ are trivial as there cannot be any copies of $C_k$ in $T$. For the induction step, consider the $d$ branches $T_1,T_2,\ldots,T_d$ of $T$ with $n_1,n_2,\ldots,n_d$ leaves, respectively. So we have $n_1+n_2+\cdots+n_d=n$. We distinguish possible scenarios that can occur for a subset of $k$ leaves: 
\begin{itemize}
\item all $k$ leaves belong to the same branch of $T$. The total number of these subsets of leaves that induce $C_k$ is given by $c(C_k,T_1)+c(C_k,T_2)+\cdots + c(C_k,T_d)$,
\item one of the branches of $T$ contains more than one of the $k$ leaves, but not all of them. In this case the leaf-induced subtree is not isomorphic to $C_k$,
\item $k$ of the branches of $T$, say (without loss of generality) $T_1,T_2,\ldots,T_k$, contain exactly one of the leaves each. In this case, the $k$ leaves always induce $C_k$, yielding $n_1 \cdot n_2 \cdot \ldots \cdot n_k$ copies of $C_k$.
\end{itemize}
Therefore, we establish that
\begin{align*}
c(C_k,T)=\sum_{i=1}^d c(C_k,T_i)~+\sum_{\substack{\{i_1,i_2,\ldots,i_k\}\subseteq \{1,2,\ldots,d\}}}~\prod_{j=1}^k n_{i_j}\,.
\end{align*}
The induction hypothesis gives
\begin{equation}\label{eq:cCk}
c(C_k,T)\leq \frac{\binom{d}{k}}{d^k-d} \cdot \sum_{i=1}^d(n_i^k-n_i)~+\sum_{\substack{\{i_1,i_2,\ldots,i_k\}\subseteq \{1,2,\ldots,d\}}}~\prod_{j=1}^k n_{i_j}\,.
\end{equation}
On the other hand, the Multinomial Theorem yields the following decomposition:
\begin{align*}
n^k &=\Bigg(\sum_{i=1}^d n_i \Bigg)^k = \sum_{\substack{l_1,l_2,\ldots,l_d \geq 0 \\ l_1+l_2+ \cdots + l_d = k}} \binom{k}{l_1,l_2,\ldots,l_d} \prod_{i=1}^d n_i^{l_i} \\
&= \sum_{\substack{l_1,l_2,\ldots,l_d \geq 0 \\ l_1+l_2+ \cdots + l_d = k}} \binom{k}{l_1,l_2,\ldots,l_d} \frac{1}{d!} \sum_{\pi \in S_d} \prod_{i=1}^d n_{\pi(i)}^{l_i} \\
&= \sum_{i=1}^d n_i^k + \sum_{\substack{0 \leq l_1,l_2,\ldots,l_d < k \\ l_1+l_2+ \cdots + l_d = k}} \binom{k}{l_1,l_2,\ldots,l_d} \frac{1}{d!} \sum_{\pi \in S_d} \prod_{i=1}^d n_{\pi(i)}^{l_i}\,.
\end{align*}
Since every vector $(l_1,l_2,\ldots,l_d)$ of nonnegative integers with $l_1+l_2+\cdots+l_d = k$ majorizes the vector $(1,1,\ldots,1,0,0,\ldots,0)$ ($k$ ones, followed by $d-k$ zeros), we can apply Muirhead's Inequality to every term in the second sum of this decomposition:
$$\sum_{\pi \in S_d} \prod_{i=1}^d n_{\pi(i)}^{l_i} \geq \sum_{\pi \in S_d} \prod_{i=1}^k n_{\pi(i)} = k!(d-k)! \sum_{\substack{\{i_1,i_2,\ldots,i_k\}\subseteq \{1,2,\ldots,d\}}}~\prod_{j=1}^k n_{i_j}\,.$$
This gives us
\begin{align*}
n^k &\geq \sum_{i=1}^d n_i^k + \sum_{\substack{0 \leq l_1,l_2,\ldots,l_d < k \\ l_1+l_2+ \cdots + l_d = k}} \binom{k}{l_1,l_2,\ldots,l_d} \frac{k!(d-k)!}{d!} \sum_{\substack{\{i_1,i_2,\ldots,i_k\}\subseteq \{1,2,\ldots,d\}}}~\prod_{j=1}^k n_{i_j} \\
&= \sum_{i=1}^d n_i^k + (d^k-d) \frac{k!(d-k)!}{d!} \sum_{\substack{\{i_1,i_2,\ldots,i_k\}\subseteq \{1,2,\ldots,d\}}}~\prod_{j=1}^k n_{i_j}\,,
\end{align*}
using the Multinomial Theorem in the opposite direction now. We can rewrite this as
$$\sum_{\substack{\{i_1,i_2,\ldots,i_k\}\subseteq \{1,2,\ldots,d\}}}~\prod_{j=1}^k n_{i_j} \leq \frac{\binom{d}{k}}{d^k-d} \cdot \Big( n^k - \sum_{i=1}^d n_i^k \Big)\,.$$
Plugging this into~\eqref{eq:cCk} yields
\begin{align*}
c(C_k,T) &\leq \frac{\binom{d}{k}}{d^k-d} \cdot \sum_{i=1}^d(n_i^k-n_i) + \frac{\binom{d}{k}}{d^k-d} \cdot \Big( n^k - \sum_{i=1}^d n_i^k \Big) \\
&= \frac{\binom{d}{k}}{d^k-d} (n^k-n)\,,
\end{align*}
completing the induction. Furthermore, equality can only arise in this context if
$$n_1=n_2=\cdots =n_d\,.$$

Thus the inequality holds with equality if and only if, for every internal vertex $v$ of $T$, the number of leaves in the $d$ branches of the subtree of $T$ rooted at $v$ are the same: in this case, $T$ is a complete $d$-ary tree as well. 

The assertion on the inducibility follows by passing to the density and taking the limit:
\begin{align*}
i_d(C_k)=\lim_{n\to \infty} \Bigg\{\frac{\binom{d}{k}\cdot \big(n^k-n \big)/(d^{k}-d)}{\binom{n}{k}}\Bigg\}=\frac{d!}{(d-k)!\cdot (d^k-d)}\,.
\end{align*}

\medskip
We now turn to the second assertion of the theorem.

\medskip

\textbf{Claim}: For every given positive integer $k\geq 3$,
\begin{align*}
u_k(x)= \frac{(x-1)(x-2)\cdots (x-k+1)}{x^{k-1}-1}
\end{align*}
is an increasing function for $x\geq k$. 

\medskip
\textit{Proof of the Claim}: Indeed, we have
\begin{align*}
\log u_k(x)=-\log (x^{k-1}-1)+ \sum_{i=1}^{k-1} \log (x-i) 
\end{align*}
so that
\begin{align*}
\frac{d}{dx}\Big(\log u_k(x) \Big)=&-\frac{k-1}{x^{k-1}-1}\cdot x^{k-2}+\sum_{i=1}^{k-1}\frac{1}{x-i}\\
&=\sum_{i=1}^{k-1}\Bigg(\frac{1}{x-i}-\frac{x^{k-2}}{x^{k-1}-1}\Bigg)\\
&=\frac{1}{x^{k-1}-1}\cdot \sum_{i=1}^{k-1}\Bigg(\frac{-1+i\cdot x^{k-2}}{x-i}\Bigg)>0
\end{align*}
as $x\geq k\geq 3$.

\medskip
We conclude that $\big(i_d(C_k)\big)_{d\geq k}$ is an increasing sequence as soon as $k\geq 3$.
\end{proof}

\bigskip
By a \textit{$d$-ary caterpillar}, we mean a strictly $d$-ary tree in which every non-leaf vertex has $d-1$ children that are leaves, except for the lowest, which has $d$ children that are leaves. Note that the non-leaf vertices must lie on a single path starting at the root. We denote the $d$-ary caterpillar with $k$ leaves by $F^d_k$ (refer to Fig.~\ref{Ternary caterpillars.} for ternary caterpillars).

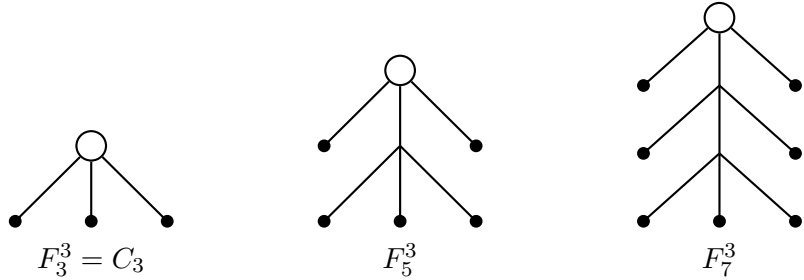
\begin{figure}[htbp]\centering
%3    
  \begin{subfigure}[b]{0.2\textwidth} \centering  
\begin{tikzpicture}[thick,level distance=10mm]
\tikzstyle{level 1}=[sibling distance=10mm]
\node [circle,draw]{}
child {[fill] circle (2pt)}
child {[fill] circle (2pt)}
child {[fill] circle (2pt)};
\end{tikzpicture}
   \caption{$F^3_3=C_3$}
  \end{subfigure}\qquad
%5
\begin{subfigure}[b]{0.2\textwidth} \centering 
  \begin{tikzpicture}[thick,level distance=10mm]
\tikzstyle{level 1}=[sibling distance=10mm]
\tikzstyle{level 2}=[sibling distance=10mm]
\node [circle,draw]{}
child {[fill] circle (2pt)}
child {child {[fill] circle (2pt)}child {[fill] circle (2pt)}child {[fill] circle (2pt)}}
child {[fill] circle (2pt)};
\end{tikzpicture}
   \caption{$F^3_5$}
  \end{subfigure} \qquad
  %7
\begin{subfigure}[b]{0.2\textwidth} \centering 
  \begin{tikzpicture}[thick,level distance=9mm]
\tikzstyle{level 1}=[sibling distance=10mm]
\tikzstyle{level 2}=[sibling distance=10mm]
\node [circle,draw]{}
child {[fill] circle (2pt)}
child {child {[fill] circle (2pt)}child {child {[fill] circle (2pt)}child {[fill] circle (2pt)}child {[fill] circle (2pt)}}child {[fill] circle (2pt)}}
child {[fill] circle (2pt)};
\end{tikzpicture}
   \caption{$F^3_7$}
  \end{subfigure} 
	\caption{Ternary caterpillars $F^3_k$.} \label{Ternary caterpillars.}
\end{figure}

\begin{theorem}\label{inducibility of the binary caterpillar is 1 in d ary trees}
Let $d\geq 2$ be a fixed positive integer. For the binary caterpillar $F^2_k$, we have
\begin{align*}
\max_{\substack{|T|=n \\T~\text{\rm strictly $d$-ary tree}}}\frac{c(F^2_k,T)}{\binom{n}{k}}=1- \mathcal{O}(n^{-1})
\end{align*}
for every $k$ and every $d$. In particular, $i_d(F^2_k)=I_d(F^2_k)=1$ for every $k$ and every $d$.
\end{theorem}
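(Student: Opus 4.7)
The plan is to establish the lower bound $\max_T \gamma(F^2_k,T) \geq 1 - \mathcal{O}(n^{-1})$ by an explicit construction---the $d$-ary caterpillar $F^d_n$ itself---and combine it with the trivial upper bound $\gamma(F^2_k,T) \leq 1$.

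First I would label the spine of $F^d_n$ as $v_1, v_2, \ldots, v_m$ from the root down to the deepest internal vertex, where $n = m(d-1)+1$, and call a leaf \emph{at level $i$} if its parent is $v_i$. I would then prove the following characterization: given a $k$-subset $S$ of leaves, let $i^\star$ denote the deepest level meeting $S$; the leaf-induced subtree on $S$ is isomorphic to $F^2_k$ if and only if every level $i < i^\star$ contributes at most one element of $S$. The "only if" direction is the key: a single selected leaf at level $i < i^\star$ contributes a degree-$2$ spine vertex in the induced tree after suppression, whereas two selected leaves at the same level $i < i^\star$ would produce a vertex of degree $\geq 3$, which is incompatible with the binary shape of $F^2_k$. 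The "if" direction is a direct verification that the induced tree precisely matches $F^2_k$, with $|S \cap \text{level } i^\star|$ supplying either the bottommost leaf or both bottom leaves of the caterpillar.

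Next I would call a triple $\{\ell_1,\ell_2,\ell_3\}$ of leaves a \emph{bad triple} if $\ell_1$ and $\ell_2$ share a parent $v_i$ with $i < m$ and $\ell_3$ lies in the subtree rooted at $v_{i+1}$. By the characterization above, every $k$-subset that fails to induce $F^2_k$ must contain some bad triple. A direct count gives
\[
\#\{\text{bad triples}\} \;\leq\; \sum_{i=1}^{m-1} \binom{d-1}{2}\bigl((m-i)(d-1)+1\bigr) \;=\; \mathcal{O}(m^2) \;=\; \mathcal{O}(n^2),
\]
and each bad triple is contained in exactly $\binom{n-3}{k-3}$ many $k$-subsets, so the number of "bad" $k$-subsets is at most $\mathcal{O}(n^2)\cdot\binom{n-3}{k-3} = \mathcal{O}(n^{k-1})$. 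Dividing by $\binom{n}{k} = \Theta(n^k)$ yields $\gamma(F^2_k, F^d_n) = 1 - \mathcal{O}(n^{-1})$, which together with $\gamma(F^2_k,T) \leq 1$ gives the asymptotic maximum stated in the theorem, and in particular $i_d(F^2_k) = I_d(F^2_k) = 1$ (using $i_d \leq I_d \leq 1$ from the excerpt).

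The main obstacle is the shape-characterization step: one has to argue carefully about how the suppression of degree-$2$ vertices interacts with the selected leaves along the spine to see that the induced tree is \emph{exactly} $F^2_k$ under the stated condition, rather than merely some binary tree. Once this is in hand, the rest is routine double counting. A minor technical point is that $F^d_n$ exists only for $n \equiv 1 \pmod{d-1}$, but since the inducibility is defined as a $\limsup$ over $n$, restricting to this subsequence suffices.
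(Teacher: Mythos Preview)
Your overall strategy---use $F^d_n$ and bound the number of \emph{bad} $k$-subsets rather than the number of good ones---is sound and different from the paper's, which instead directly exhibits $\binom{(n-1)/(d-1)}{k}(d-1)^k$ good subsets (choose $k$ internal vertices, then one leaf child of each). Both routes reach the same asymptotic conclusion; yours is a little more work but is closer to producing the exact count the paper quotes after its proof.

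However, your characterization has a genuine gap in the ``if'' direction. The condition ``every level $i<i^\star$ contributes at most one element of $S$'' does \emph{not} force $|S\cap\text{level }i^\star|\le 2$. For $d\ge 4$ (or for $d=3$ with $i^\star=m$) one can take three or more leaves all at level $i^\star$; your condition is then vacuously satisfied, yet the induced tree has a vertex with at least three leaf children and is therefore not $F^2_k$. Concretely, with $d=5$, $k=3$ and $S$ consisting of three children of the root, your condition holds but the induced tree is $C_3$. Since your bad-triple claim is exactly the contrapositive of this ``if'' direction, it is false as stated: such an $S$ contains no bad triple in your sense (there is no leaf of $S$ strictly below level $i^\star$).

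The repair is easy and does not disturb the asymptotics. The correct characterization is: the subtree is $F^2_k$ if and only if every level $i<i^\star$ meets $S$ in at most one leaf \emph{and} level $i^\star$ meets $S$ in at most two. Equivalently, $S$ is bad iff it contains some triple whose induced tree is not $F^2_3$, and there are two kinds of such triples: your ``two at level $i$, one deeper'' triples, plus triples of three leaves sharing a parent. The latter number at most $m\binom{d}{3}=\mathcal O(n)$, each lying in $\binom{n-3}{k-3}$ many $k$-subsets, contributing only $\mathcal O(n^{k-2})$ additional bad subsets---absorbed into your $\mathcal O(n^{k-1})$. With this amendment your argument goes through.
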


\begin{proof}
Fix $d\geq 2$. We use a direct counting argument which gives us a lower bound on the number of copies of $F^2_k$ in $F^d_n$. A binary caterpillar can be constructed by attaching exactly one pendant edge to all vertices of a path, except the last one (that is furthest away from the root).

Thus, a copy of $F^2_k$ in $F^d_n$ can be obtained by first choosing a subset of $k$ vertices from the set of internal vertices of $F^d_n$, and then for each of them, choosing one of its (at least) $d-1$ children that are leaves. Since every strictly $d$-ary tree $T$ has exactly $(|T|-1)/(d-1)$ internal vertices, we deduce that there are at least 
\begin{align*}
\binom{\frac{n-1}{d-1}}{k} \cdot (d-1)^k
\end{align*}
copies of $F^2_k$ in $F^d_n$. Therefore, we get
\begin{align*}
c(F^2_k,F^d_n) \geq \binom{\frac{n-1}{d-1}}{k} \cdot (d-1)^k = \frac{n^k}{k!} - \mathcal{O}(n^{k-1})\,.
\end{align*}
Hence, because we have $c(F^2_k,F^d_n) \leq  \binom{n}{k} \leq n^k/k!$ by definition, the assertion on the maximum density of $F^2_k$ in strictly $d$-ary trees and thus the inducibility $i_d(F^2_k)$ follows. In particular, we obtain $i_d(F^2_k)=I_d(F^2_k)=1$.
\end{proof} 
It can actually be shown that 
$$c(F^2_k,F^d_n) = (d-1)^{k-1} \binom{\frac{n-1}{d-1}}{k-1} \cdot \frac{2n-(d-2)(k-2)}{2k}$$
for $k,n > 1$, but this precision becomes immaterial when computing the inducibility $i_d(F^2_k)$.

%\textcolor{red}{We think that a third term (and case) is missing from the recursion below. The bottom star of $F^d_n$ may contain the bottom two
%leaves of $F^2_k$. Perhaps this recursion just should be deleted considering the length of the paper.}
%Note that $c(F^2_k,F^d_n)$ can actually be determined precisely using, for instance, the recurrence relation
%\begin{align*}
%c(F^2_k,F^d_n) &= c(F^2_k,F^d_{n-d+1}) +(d-1)c(F^2_{k-1},F^d_{n-d+1})\\
%&=(d-1)\sum_{j=\big\lfloor\frac{k-2}{d-1}\big\rfloor}^{\frac{n-d}{d-1}}c\Big(F^2_{k-1},F^d_{(d-1)j+1}\Big)\,,
%\end{align*}
%but this precision becomes immaterial when computing the inducibility $i_d(F^2_k)$. The recursion is obtained by noticing that only two scenarios can occur for a subset of $k$ leaves:
%\begin{itemize}
%\item either all the $k$ leaves of $F^2_k$ belong to the branch $F^d_{n-d+1}$ of the tree $F^d_n$ giving $c(F^2_k,F^d_{n-d+1})$ copies of $F^2_k$, 
%\item or all the $k-1$ leaves of the branch $F^2_{k-1}$ of the tree $F^2_k$ lie in the leaf set of $F^d_{n-d+1}$ while the single leaf branch of $F^2_k$ is taken from any of the $d-1$ branches of $F^d_n$ which are leaves: this yields $(d-1)\cdot c(F^2_{k-1},F^d_{n-d+1})$ copies of $F^2_k$.
%\end{itemize}

\section{Inducibility vs. Maximum Density}\label{IndVsMax}

Our first goal in this section is to prove a conjecture from~\cite{czabarka2016inducibility}, which states that the maximum of $\gamma(D,T)$ over trees $T$ with $n$ leaves converges fairly quickly to the inducibility $I_d(D)$, namely at a rate of $n^{-1}$. (To be precise, the conjecture was only made in the binary case.)

\begin{theorem}\label{maxdensityId}
For every fixed positive integer $d\geq 2$ and every $d$-ary tree $D$, we have
\begin{align*}
\max_{\substack{|T|=n\\T~\text{\rm $d$-ary tree}}} \gamma(D,T)= I_d(D)+ \mathcal{O}(n^{-1})
\end{align*}
for all $n$. In particular, 
\begin{align*}
\lim_{n\to \infty} \max_{\substack{|T|=n\\T~\text{\rm $d$-ary tree}}} \gamma(D,T) =I_d(D)\, ,
\end{align*}
where the limit is that of a decreasing sequence.
\end{theorem}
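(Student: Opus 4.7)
The plan is to combine a leaf-deletion averaging identity, which yields monotonicity of
\[
M_n := \max_{|T|=n,\, T\text{ $d$-ary}} \gamma(D,T),
\]
with a blow-up construction that controls the rate of convergence.

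For the averaging step, given a $d$-ary tree $T$ with $n \geq |D|$ leaves and any leaf $\ell$, let $T - \ell$ denote the $d$-ary tree obtained by deleting $\ell$ and, if its parent then has degree $2$, suppressing that parent. Since the suppression of a degree-$2$ vertex does not alter leaf-induced subtree structure, the copies of $D$ in $T$ that avoid $\ell$ are in bijection with the copies of $D$ in $T - \ell$. Summing over all leaves yields
\[
\sum_\ell c(D, T-\ell) \;=\; (n-|D|)\, c(D, T),
\]
and bounding each summand by $M_{n-1}\binom{n-1}{|D|}$ together with the identity $\binom{n}{|D|} = \frac{n}{n-|D|}\binom{n-1}{|D|}$ gives $\gamma(D,T) \leq M_{n-1}$, hence $M_n \leq M_{n-1}$. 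Consequently $M_n$ is decreasing, $\lim_n M_n$ exists, and must coincide with $I_d(D) = \limsup_n M_n$; in particular $M_n \geq I_d(D)$ for every $n$.

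For the $\mathcal{O}(n^{-1})$ rate, fix $n$, let $T$ be an optimal $n$-leaf $d$-ary tree with $\gamma(D,T) = M_n$, and for each $m \geq 1$ construct an $(nm)$-leaf $d$-ary tree $T'$ by replacing each leaf of $T$ with a fixed $m$-leaf $d$-ary tree (for example, the binary caterpillar $F^2_m$, which is $d$-ary for every $d\geq 2$). For any copy of $D$ in $T$ on leaves $v_1, \ldots, v_{|D|}$ and any choice of a leaf $\ell_i$ inside the blow-up sitting at $v_i$, the leaf-induced subtree of $T'$ on $\{\ell_1, \ldots, \ell_{|D|}\}$ has exactly the same MRCA lattice as the original copy in $T$, hence is isomorphic to $D$. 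This produces at least $c(D,T)\cdot m^{|D|}$ copies of $D$ in $T'$, so
\[
\gamma(D, T') \;\geq\; M_n \cdot \frac{\binom{n}{|D|}\, m^{|D|}}{\binom{nm}{|D|}} \;=\; M_n \prod_{i=0}^{|D|-1}\frac{nm - im}{nm - i} \;\geq\; M_n\!\left(1 - \frac{2|D|^2}{n}\right)
\]
once $n$ is large enough, since each factor is at least $1 - 2i/n$. Therefore $M_{nm} \geq M_n - 2|D|^2/n$ uniformly in $m$, and letting $m \to \infty$ yields $I_d(D) \geq M_n - 2|D|^2/n$, which combined with $M_n \geq I_d(D)$ gives $M_n - I_d(D) = \mathcal{O}(n^{-1})$.

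The most delicate point is the bookkeeping: one has to verify that leaf-deletion-with-suppression really induces a bijection on the copies of $D$ avoiding $\ell$, and that the blow-up multiplies the count of copies by exactly $m^{|D|}$. Both facts reduce to the observation that a leaf-induced subtree is determined entirely by the MRCA lattice of its selected leaves, which is unaffected by suppressing degree-$2$ ancestors and also by attaching additional descendants below each former leaf.
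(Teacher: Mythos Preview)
Your proof is correct. The monotonicity half is essentially identical to the paper's: both use the averaging identity that each copy of $D$ avoids $n-|D|$ leaves, though the paper picks a single below-average leaf while you sum over all leaves---these are equivalent formulations of the same idea.

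The rate argument, however, is genuinely different. The paper obtains the lower bound on $I_d(D)$ by a \emph{single-leaf doubling}: choose a leaf $\ell_2$ in at least the average number $|D|\,c(D,T)/n$ of copies, replace it by a cherry to get $T^+$ with $n+1$ leaves, and deduce
\[
M_{n+1}\ \geq\ \Bigl(1-\tfrac{|D|(|D|-1)}{n(n+1)}\Bigr)M_n,
\]
then iterates and telescopes the resulting product to reach $I_d(D)\geq (1-|D|(|D|-1)/n)M_n$. Your approach instead performs a \emph{global blow-up}, replacing every leaf by an $m$-leaf tree to jump directly from $n$ to $nm$ leaves, and then sends $m\to\infty$ along this subsequence (which suffices because monotonicity already gives convergence of the full sequence). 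Your route is slightly cleaner---no iteration or telescoping sum is needed, and the product $\prod_i \frac{nm-im}{nm-i}$ is estimated in one shot. The paper's route has the aesthetic advantage of being the exact dual of the deletion step (remove a below-average leaf / double an above-average leaf) and of moving through every value of $n$; but both yield the same $\mathcal{O}(n^{-1})$ bound with the same implicit constant order $|D|^2$.
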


\begin{proof}
Fix $d\geq 2$. Let $D$ and $T$ be arbitrary $d$-ary trees such that $|D|\leq |T|$. Since $c(D,T)$ represents the number of subsets of $|D|$ leaves of $T$ that induce a copy of $D$, we immediately deduce that the $|T|$ leaves of $T$ are contained in $|D|\cdot c(D,T)/|T|$ copies of $D$ on average. Thus, there exist leaves $l_1$ and $l_2$ of $T$ satisfying the relation
\begin{align}\label{InView}
c_{l_1}(D,T) \leq \frac{|D| \cdot c(D,T)}{|T|} \leq c_{l_2}(D,T) 
\end{align}
where $c_{l}(D,T)$ stands for the number of $l$-containing subsets of $|D|$ leaves of $T$ that induce a copy of $D$. 
\begin{itemize}
\item We can assume $|T|\geq 2$. The number of subsets of $|D|$ leaves of $T$ not involving the leaf $l_1$ that induce a tree isomorphic to $D$ is exactly $c(D,T) - c_{l_1}(D,T)$, which is therefore at least $\big(1-\frac{|D|}{|T|} \big)c(D,T)$ in view of relation~\eqref{InView}. 
Create from $T$ a new tree $T'$ by deleting the leaf $l_1$ (and suppress its former neighbor, if its new degree is 2).
It follows that
\begin{align*}
\max_{|T'|=n-1} c(D,T') \geq \Big(1-\frac{|D|}{n} \Big)\max_{|T|=n} c(D,T)
\end{align*}
as $T$ was taken to be an arbitrary $d$-ary tree. Hence, by passing to the density by dividing by $\binom{n-1}{|D|}$, we obtain
\begin{align*}
\max_{\substack{|T'|=n-1\\T'~\text{$d$-ary tree}}} \gamma(D,T') \geq \max_{\substack{|T|=n\\T~\text{$d$-ary tree}}} \gamma(D,T) 
\end{align*}
for every $n\geq 2$. So the sequence 
\begin{align*}
\Biggl( \max_{\substack{|T|=n\\T~\text{$d$-ary tree}}} \gamma(D,T) \Biggl)_{n\geq 1}
\end{align*}
is decreasing and bounded from below, which means that the limit
\begin{align*}
\lim_{n\to \infty}\max_{\substack{|T|=n\\T~\text{$d$-ary tree}}} \gamma(D,T)
\end{align*}
exists and is $I_d(D)$.

\item Denote by $T^{+}$ the $d$-ary tree obtained by replacing the leaf $l_2$ by an internal vertex with two leaves $l_2,l_2^{\prime}$ attached to it. That way, the number of copies of $D$ in $T^{+}$ not involving $l_2^{\prime}$ is just $c(D,T)$, whereas the number of copies of $D$ in $T^{+}$ involving $l_2^{\prime}$ is no less than the number of copies of $D$ in $T$ involving $l_2$. Therefore, by relation~\eqref{InView}, the quantity $\big(1+\frac{|D|}{|T|} \big)c(D,T)$ offers a natural lower bound on $c(D,T^{+})$. It follows that
\begin{align*}
\Big(1+\frac{|D|}{n} \Big)\max_{|T|=n} c(D,T) \leq  \max_{|T|=n+1} c(D,T)
\end{align*}
as $T$ was assumed to be an arbitrary $n$-leaf $d$-ary tree. Consequently, passing to the density, we obtain
\begin{align*}
\max_{\substack{|T|=n+1\\T~\text{$d$-ary tree}}} \gamma(D,T) \geq \Bigg(1-\frac{|D|(-1+|D|)}{n(n+1)} \Bigg)\max_{\substack{|T|=n\\T~\text{$d$-ary tree}}} \gamma(D,T)
\end{align*}
for every $n$, and by $p$-fold iteration
\begin{align*}
\max_{\substack{|T|=n+p\\T~\text{$d$-ary tree}}} \gamma(D,T) \geq  \max_{\substack{|T|=n\\T~\text{$d$-ary tree}}} \gamma(D,T)\cdot \prod_{j=0}^{p-1}\Bigg(1- \frac{|D|(-1+|D|)}{(n+p-j)(n+p-j-1)}\Bigg)
\end{align*}
for all $n,p$ with $p\geq 1$ and $n\geq |D|$. Fixing $n\geq 2$ and $p \geq 1$, we have 
\begin{align*}
0 \leq \frac{|D|(-1+|D|)}{(n+p-j)(n+p-j-1)} <1
\end{align*}
for every $0\leq j \leq p-1$. A simple induction on $p$ yields
\begin{align*}
\prod_{j=0}^{p-1}\Bigg(1- \frac{|D|(-1+|D|)}{(n+p-j)(n+p-j-1)}\Bigg) \geq 1-\sum_{j=0}^{p-1} \frac{|D|(-1+|D|) }{(n+p-j)(n+p-j-1)}\,.
\end{align*}

Now, letting $p \to \infty$ instantly gives the estimate
\begin{align*}
I_d(D) \geq \max_{\substack{|T|=n\\T~\text{$d$-ary tree}}} \gamma(D,T) \cdot \Bigg(1-\sum_{i=0}^{\infty} \frac{|D|(-1+|D|) }{(n+i+1)(n+i)} \Bigg)
\end{align*}
for every $n$. Since
\begin{align*}
\sum_{i=0}^{\infty} \frac{1}{(n+i+1)(n+i)}= \sum_{i=0}^{\infty} \Big( \frac{1}{n+i} - \frac{1}{n+i+1}\Big) = \frac{1}{n}\,,
\end{align*}
this shows that
\begin{align*}
I_d(D) \geq \Big(1- \frac{|D|(-1+|D|)}{n} \Big)\max_{\substack{|T|=n\\T~\text{$d$-ary tree}}} \gamma(D,T)\,.
\end{align*}
\end{itemize}

Now we combine the two contributions to obtain
\begin{align*}
0\leq \max_{\substack{|T|=n\\T~\text{$d$-ary tree}}}  \gamma(D,T) - I_d(D) \leq \frac{|D|(-1+|D|)}{n} 
\end{align*}
for every $n$. The desired asymptotic formula follows immediately.
\end{proof}

We remark that the averaging reasoning employed in the proof of Theorem~\ref{maxdensityId} does not work for a strictly $d$-ary tree. For example, removing one leaf (as well as the single edge incident to it) from a strictly $d$-ary tree yields a tree that is no longer strictly $d$-ary as soon as $d\geq 3$.
Moreover, the error term $\mathcal{O}(n^{-1})$ is generally best possible, as e.g.~the discussion of the complete binary tree of height $2$ (with four leaves) in~\cite{czabarka2016inducibility} shows.

\medskip

Our next focus is to prove that $i_d(D)$ and $I_d(D)$ are always equal for every $d$-ary tree $D$ and every $d$.

\begin{lemma}\label{lemForTpvsT}
For a fixed positive integer $k$, we have
\begin{align*}
\frac{\binom{n}{k}}{\binom{p+n}{k}} = 1-\mathcal{O}(p/n)
\end{align*}
as $p\geq 1$ and $n/p \to \infty$.
\end{lemma}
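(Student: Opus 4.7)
\medskip

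\textbf{Proof proposal.} The plan is to expand the ratio as a telescoping product and bound each factor individually, using that $k$ is fixed. Writing
\begin{align*}
\frac{\binom{n}{k}}{\binom{n+p}{k}} = \frac{n(n-1)\cdots(n-k+1)}{(n+p)(n+p-1)\cdots(n+p-k+1)} = \prod_{i=0}^{k-1} \frac{n-i}{n+p-i} = \prod_{i=0}^{k-1}\Bigl(1 - \frac{p}{n+p-i}\Bigr),
\end{align*}
reduces the claim to controlling a product of $k$ factors each of the form $1 - p/(n+p-i)$.

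For the upper bound, each factor is at most $1$, so the product is at most $1$. For the lower bound, I would first observe that whenever $n \geq 2k$ (which is ensured eventually since $n/p \to \infty$ and $p \geq 1$), the denominator satisfies $n+p-i \geq n-k+1 \geq n/2$ for every $0 \leq i \leq k-1$. Consequently $p/(n+p-i) \leq 2p/n \leq 1$ (for $n \geq 2p$). Applying the elementary inequality $\prod_{i}(1-x_i) \geq 1 - \sum_i x_i$, valid whenever $x_i \in [0,1]$, then gives
\begin{align*}
\prod_{i=0}^{k-1}\Bigl(1 - \frac{p}{n+p-i}\Bigr) \geq 1 - \sum_{i=0}^{k-1} \frac{p}{n+p-i} \geq 1 - \frac{2kp}{n}.
\end{align*}
Since $k$ is a fixed constant, the right-hand side is $1 - \mathcal{O}(p/n)$, which combined with the trivial upper bound $1$ yields the desired asymptotic.

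No step looks like a real obstacle: the proof is essentially a one-line algebraic identity followed by a standard product estimate. The only thing to be careful about is making explicit the regime in which the bound is applied—namely, that $n/p \to \infty$ guarantees both $n \geq 2k$ and $n \geq 2p$ eventually, so that the denominators $n+p-i$ stay comparable to $n$ and the factors $p/(n+p-i)$ stay in $[0,1]$, allowing the product-vs-sum inequality to be applied cleanly.
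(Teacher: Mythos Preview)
Your proof is correct and follows essentially the same approach as the paper: a direct asymptotic estimate of the ratio of binomial coefficients. The paper's version approximates numerator and denominator separately as $n^k(1-\mathcal{O}(1/n))$ and $(n+p)^k(1-\mathcal{O}(1/(n+p)))$ and then expands $(1+p/n)^{-k}$, whereas you write the exact ratio as the product $\prod_{i=0}^{k-1}(1-p/(n+p-i))$ and bound it via $\prod(1-x_i)\geq 1-\sum x_i$; both routes are one-line computations yielding the same $1-\mathcal{O}(p/n)$.
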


\begin{proof}
We have
\begin{align*}
\frac{\binom{n}{k}}{\binom{p+n}{k}} &= \frac{n^k \Big(1 - \mathcal{O}\big(\frac{1}{n}\big)\Big) }{(p+n)^k \Big(1 -  \mathcal{O}\big(\frac{1}{p+n} \big) \Big)}\\
&=\Big(1+\frac{p}{n}\Big)^{-k}\Bigg(1 - \mathcal{O}\Big(\frac{1}{n}\Big)\Bigg)\\
&=1- \mathcal{O}\Big(\frac{p}{n}\Big)
\end{align*}
as $p\geq 1$ and $n/p \to \infty$.
\end{proof}

\begin{theorem}\label{towards idId}
Fix a positive integer $d\geq 2$, and let $D$ be a $d$-ary tree. For every positive integer $n \equiv 1 \mod (d-1)$ and every $d$-ary tree $T$ with $\lfloor n^{\frac{1}{2}} \rfloor$ leaves, there exists a strictly $d$-ary tree $T^*$ with $n$ leaves such that the asymptotic formula
\begin{align*}
\gamma(D,T) =  \gamma(D,T^{*}) + \mathcal{O}(n^{-1/2})
\end{align*}
holds as $n\rightarrow\infty$, and the $\mathcal{O}$-constant depends on $d$ only.

In particular, we have
\begin{align*}
i_d(D)=I_d(D)\,.
\end{align*}
\end{theorem}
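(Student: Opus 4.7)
The plan is to construct $T^*$ as a blow-up of $T$. First I would patch $T$ into a strictly $d$-ary tree $T'$ by attaching, at each internal vertex $v$ of $T$ with $c_v<d$ children, $d-c_v$ new leaf children; this yields a strictly $d$-ary tree $T'$ with $m'$ leaves, where $m\le m'\le (d-1)m$ and $m'-m=\mathcal{O}(\sqrt{n})$. Since $T$ and $T'$ share the same set of internal vertices, the leaf-induced subtree of $T'$ spanned by any subset of original leaves of $T$ coincides with the leaf-induced subtree of $T$ on the same subset. Next, I would replace each of the $m$ original-leaf positions of $T'$ with a strictly $d$-ary ``main block'' of $L_i=\Theta(\sqrt{n})$ leaves and each of the $m'-m$ newly added leaves with a single-leaf ``extra block'', choosing the $L_i$ as close to $L:=(n-m'+m)/m$ as possible subject to $L_i\equiv 1\pmod{d-1}$ and $\sum L_i+(m'-m)=n$. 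The resulting $T^*$ is strictly $d$-ary with exactly $n$ leaves.

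The heart of the proof is the estimate
\begin{align*}
c(D,T^*) = L^{|D|}\,c(D,T) + \mathcal{O}\bigl(n^{|D|-1/2}\bigr).
\end{align*}
For the main contribution, I would pick a subset of $|D|$ leaves of $T^*$ drawn one from each of $|D|$ distinct main blocks; the within-block paths consist of degree-two vertices of the minimal extracted subtree and therefore collapse entirely to the block roots, so the resulting leaf-induced subtree is precisely the leaf-induced subtree of $T'$---equivalently of $T$---on the $|D|$ corresponding original leaves. Hence every copy of $D$ in $T$ contributes $\prod_j L_{i_j}=L^{|D|}\bigl(1+\mathcal{O}(n^{-1/2})\bigr)$ copies of $D$ in $T^*$. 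The error contributions come from (i) subsets placing two leaves in a common main block, bounded by $m\binom{L}{2}\binom{n}{|D|-2}=\mathcal{O}(n^{|D|-1/2})$, and (ii) subsets using at least one extra-block leaf, bounded by $(m'-m)\binom{n}{|D|-1}=\mathcal{O}(n^{|D|-1/2})$ since $m'-m=\mathcal{O}(\sqrt{n})$.

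Dividing by $\binom{n}{|D|}$ and using $L^{|D|}\binom{m}{|D|}/\binom{n}{|D|}=1+\mathcal{O}(n^{-1/2})$ (which follows from $mL=n-\mathcal{O}(\sqrt{n})$ together with Lemma~\ref{lemForTpvsT}) yields the asymptotic formula $\gamma(D,T^*)=\gamma(D,T)+\mathcal{O}(n^{-1/2})$. To deduce $i_d(D)=I_d(D)$, the inequality $i_d(D)\le I_d(D)$ is immediate from the definitions; for the reverse, I would pick $d$-ary trees $T_m$ of $m$ leaves attaining $\max_{|T|=m}\gamma(D,T)$, which by Theorem~\ref{maxdensityId} tend to $I_d(D)$, and then apply the above construction with $T=T_{\lfloor n^{1/2}\rfloor}$ along the subsequence $n\equiv 1\pmod{d-1}$ to obtain strictly $d$-ary trees whose densities also tend to $I_d(D)$, whence $i_d(D)\ge I_d(D)$.

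The main obstacle I foresee is the arithmetic bookkeeping in the blow-up step: the block sizes must simultaneously sum to $n$, each satisfy $\equiv 1\pmod{d-1}$ to guarantee strict $d$-arity, and remain within $\mathcal{O}(1)$ of the target $n/m$ so that the main-term error stays $\mathcal{O}(n^{-1/2})$. The congruence constraint can be accommodated by letting some main blocks have size $L$ and others $L+(d-1)$, but verifying that all such $\mathcal{O}(1)$ fluctuations---combined with the collapse-of-in-block-paths argument for the leaf-induced subtree of $T^*$---fit neatly into the final error exponent is the delicate part of the argument.
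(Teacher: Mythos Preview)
Your proposal is correct and follows essentially the same construction as the paper: augment $T$ to a strictly $d$-ary $T'$, blow up the original leaves by strictly $d$-ary subtrees of size $\Theta(\sqrt{n})$, and observe that a $|D|$-subset drawn from distinct blocks induces the same tree as the corresponding leaves of $T$, while the remaining subsets contribute only $\mathcal{O}(n^{|D|-1/2})$. Your combinatorial counting is just a rephrasing of the paper's probabilistic formulation via the law of total probability.

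The one noteworthy difference is how the exact leaf count $n$ is hit. You vary the block sizes (some $L$, some $L+(d-1)$) and keep the extra leaves as singletons; you correctly flag the congruence bookkeeping as the delicate part. The paper avoids this entirely: it uses a \emph{single} fixed dangling tree $S$ at every original leaf, accepts that the resulting $T''$ has $n-\mathcal{O}(\sqrt{n})$ leaves, and then glues a separate strictly $d$-ary padding tree $S_P$ at the root to reach exactly $n$. The padding tree contributes only $\mathcal{O}(\sqrt{n})$ extra leaves, so subsets touching it fall under the same $\mathcal{O}(n^{|D|-1/2})$ error bound you already have for the extra-block leaves. This is a cleaner device than your variable-block arithmetic (no congruence juggling among the $L_i$), though both routes work and yield the same error exponent.
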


\begin{proof}
Fix $d\geq 2$ and $n \equiv 1 \mod (d-1)$. Consider an arbitrary $d$-ary tree $T$ (not necessarily a strictly $d$-ary tree) such that
%\begin{align*}
$|T|= \lfloor n^{\frac{1}{2}} \rfloor = n^{\frac{1}{2}} - \mathcal{O}(1)\,.$
%\end{align*} 

We describe an explicit construction for $T^*$; the line of reasoning follows probabilistic ideas.

\begin{itemize}
\item For every $r \in\{2,3,\ldots,d-1\}$, add $d-r$ more branches of one leaf each to every internal vertex of $T$ whose number of children is $r$. Call the augmented tree $T^{\prime}$, and denote by $\tilde{L}(T^{\prime})$ the set of the additional leaves added to $T$ to obtain the strictly $d$-ary tree $T^{\prime}$. If $|T|_r$ stands for the number of internal vertices of $T$ with $r$ children, then we have
\begin{align*}
|\tilde{L}(T^{\prime})| = \sum_{r=2}^{d-1} (d-r)|T|_r <(d-2) |T|,~|\tilde{L}(T^{\prime})| =\mathcal{O}\big(n^{\frac{1}{2}}\big)
\end{align*}
because it is easy to see that the total number of internal vertices of $T$ is less than its number of leaves. Note that 
\begin{align*}
|T^{\prime}| =|T|+|\tilde{L}(T^{\prime})| = \Theta \big(n^{\frac{1}{2}} \big)\,.
\end{align*}

\item Consider the tree $T^{\prime}$. Let $m$ be the greatest positive integer satisfying both $m \equiv 1 \mod (d-1)$ and $m\leq n^{\frac{1}{2}}$. So we have $\lfloor n^{\frac{1}{2}} \rfloor - (d-2) \leq m \leq \lfloor n^{\frac{1}{2}} \rfloor $. Since it suffices to prove the statement for sufficiently large $n$, we may assume that $m\geq d$. Choose an arbitrary strictly $d$-ary tree $S$ with $m-(d-1)$ leaves so that
\begin{align*}
\lfloor n^{\frac{1}{2}} \rfloor +3 -2\cdot d \leq |S| \leq \lfloor n^{\frac{1}{2}} \rfloor +1 -d\,.
\end{align*}

Append a copy of $S$ to every leaf $l$ of $T^{\prime}$ that does not belong to $\tilde{L}(T^{\prime})$ by identifying its root with $l$. Call the resulting tree $T^{\prime \prime}$. We shall refer to the tree $S$ as a `dangling' tree of $T^{\prime \prime}$. Note that 
\begin{align*}
|T^{\prime \prime}|=|T|\cdot |S|+|\tilde{L}(T^{\prime})| = n - \mathcal{O}\big(n^{\frac{1}{2}} \big)
\end{align*}
and that $|T^{\prime \prime}| < n$ in view of the inequalities
\begin{align*}
|T| \leq n^{\frac{1}{2}},~ |S|\leq n^{\frac{1}{2}} -(d-1),~|\tilde{L}(T^{\prime})|<(d-1)|T|\,.
\end{align*}

\item  Additionally, pick an arbitrary strictly $d$-ary tree $S_{P}$ with
\begin{align*}
1+n-|T^{\prime \prime}|=\mathcal{O}\big(n^{\frac{1}{2}} \big)
\end{align*}
leaves and append the root of $T^{\prime \prime}$ to a leaf of $S_P$. Denote by $T^*$ the strictly $d$-ary tree that results from this construction. Note that
\begin{align*}
|T^*|=|T^{\prime \prime}|+|S_P|-1=n\,.
\end{align*}
\end{itemize}

\medskip
Now, let $1 \leq k \leq |T|$ be an arbitrary but fixed positive integer, and pick $k$ leaves of $T^*$ uniformly at random.

The probability $P$ that none of the $k$ randomly chosen leaves of $T^*$ lies in $\tilde{L}(T^{\prime})$ or $S_P$ and no two of them belong to the same dangling tree of $T^*$ is exactly
 \begin{align*}
\frac{\binom{|T|}{k}}{\binom{|T^{*}| }{k}}\cdot |S|^k\,.
\end{align*}
In words: since there are exactly $|T|$ dangling trees in $T^*$, we choose $k$ of them and one leaf from each of the $k$ chosen dangling trees to obtain such a subset of $k$ leaves of $T^*$. 

With Lemma~\ref{lemForTpvsT} at our disposal, we obtain

\begin{align}\label{choosekleavesA}
\begin{split}
\frac{\binom{|T|}{k}}{\binom{|T^{*}| }{k}}\cdot |S|^k &=\frac{\big(|T|\cdot |S|\big)^k\Big(1 - \mathcal{O}\big(|T|^{-1} \big)\Big)}{n^k \Big(1 - \mathcal{O}\big( n^{-1}\big)\Big)}\\
&=1- \mathcal{O}\big( n^{-\frac{1}{2}}\big)
\end{split}
\end{align}
as $n \to \infty$.

\medskip

Now let $D$ be a $d$-ary tree with $k$ leaves. Note that the tree induced by $k$ leaves of $T^*$ that belong to $k$ distinct dangling trees is equal to the tree induced by the $k$ leaves of $T$ to which these $k$ dangling tree were attached. Hence the probability that $k$ randomly chosen leaves of $T^*$ are from distinct dangling trees of $T^*$ and induce a copy of $D$ is given by 
\begin{align*}
\frac{c(D,T)}{\binom{n}{k}} \cdot |S|^k
\end{align*}
(recall that $T^*$ has $n$ leaves). From this observation and the fact that $\gamma(D,T^*)$ is exactly the probability that $k$ distinct randomly chosen leaves of $T^*$ induce a copy of $D$, we deduce that
\begin{align*}
\gamma(D,T^*) = \frac{c(D,T)}{\binom{n}{k}} \cdot |S|^k + Q \cdot \Bigg(1- \frac{\binom{|T|}{k} \cdot |S|^k}{\binom{n}{k}} \Bigg)
\end{align*}
by virtue of the law of total probability, where $Q$ stands for the probability that $k$ distinct leaves of $T^*$ induce a copy of $D$ under the condition that the event `$k$ randomly chosen leaves of $T^*$ are from distinct dangling trees of $T^*$' has not occurred. 

\medskip
This implies that
\begin{align*}
\gamma(D,T^*) = \frac{c(D,T)}{\binom{|T|}{k}} + \Bigg(1- \frac{\binom{|T|}{k} \cdot |S|^k}{\binom{n}{k}} \Bigg) \cdot \Bigg(Q- \frac{c(D,T)}{\binom{|T|}{k}}\Bigg)\,,
\end{align*}
and since $Q$ and $\gamma(D,T) = c(D,T)/\binom{|T|}{k}$ are both between $0$ and $1$, it follows from the asymptotic formula~\eqref{choosekleavesA} that 
\begin{align*}
\gamma(D,T^*) - \gamma(D,T) = \mathcal{O}(n^{-1/2})\,.
\end{align*}
This finishes the proof of the first part of the theorem.

\medskip
Finally, the immediate consequence we obtain is that
\begin{align*}
I_d(D)=\limsup_{\substack{|T| \to \infty \\ T~\text{$d$-ary tree}}} \gamma(D,T) = \limsup_{|T^*| \to \infty} \gamma(D,T^*) \leq \limsup_{\substack{|T| \to \infty \\ T~\text{strictly $d$-ary tree}}} \gamma(D,T) = i_d(D)\,.
\end{align*}
In other words, this shows that $I_d(D)\leq i_d(D)$. Thus, the proof of the second part of the theorem is completed as well because we have $I_d(D)\geq i_d(D)$ by definition.
\end{proof}

With Theorem~\ref{towards idId} and its proof at hand, we can now prove an analogue of Theorem~\ref{maxdensityId} for the maximum density in strictly $d$-ary trees. 

\begin{corollary}\label{lowerBoundAsymptid}
For every fixed positive integer $d\geq 2$ and every $d$-ary tree $D$, we have
\begin{align*}
\max_{\substack{|T|=n\\T~\text{\rm strictly $d$-ary tree}}} \gamma(D,T) = i_d(D)+ \mathcal{O}(n^{-1/2})
\end{align*}
for every $n$. In particular, we have
\begin{align*}
\lim_{n \to \infty} \max_{\substack{|T|=n\\T~\text{\rm strictly $d$-ary tree}}} \gamma(D,T) = i_d(D)\,.
\end{align*}
\end{corollary}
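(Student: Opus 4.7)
The plan is to combine the two main results that precede the corollary: Theorem~\ref{maxdensityId}, which gives a fast rate of convergence for the maximum density over all $d$-ary trees, and Theorem~\ref{towards idId}, whose proof supplies an explicit construction turning any $d$-ary tree with $\lfloor n^{1/2}\rfloor$ leaves into a strictly $d$-ary tree with $n$ leaves that preserves the density of $D$ up to an error of $\mathcal{O}(n^{-1/2})$. Throughout we tacitly restrict to $n\equiv 1\pmod{d-1}$, the only values of $n$ for which a strictly $d$-ary tree with $n$ leaves exists (and hence for which the left-hand side of the asserted formula is defined).

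For the \emph{upper bound}, every strictly $d$-ary tree is in particular a $d$-ary tree, so
\begin{align*}
\max_{\substack{|T|=n\\ T~\text{strictly $d$-ary}}}\gamma(D,T) \;\leq\; \max_{\substack{|T|=n\\ T~\text{$d$-ary}}}\gamma(D,T) \;=\; I_d(D)+\mathcal{O}(n^{-1}) \;=\; i_d(D)+\mathcal{O}(n^{-1})
\end{align*}
by Theorem~\ref{maxdensityId} and the identity $I_d(D)=i_d(D)$ from Theorem~\ref{towards idId}. This bound is actually sharper than the rate claimed in the corollary and requires no further work.

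For the \emph{lower bound}, I would choose $m=\lfloor n^{1/2}\rfloor$ and invoke Theorem~\ref{maxdensityId}, which not only gives the rate of convergence but also shows that the sequence of maxima is decreasing to $I_d(D)$; in particular, there exists a $d$-ary tree $T$ with $m$ leaves satisfying $\gamma(D,T)\geq I_d(D)=i_d(D)$. Apply the construction from the proof of Theorem~\ref{towards idId} to this $T$ to obtain a strictly $d$-ary tree $T^*$ with exactly $n$ leaves such that $\gamma(D,T^*)-\gamma(D,T)=\mathcal{O}(n^{-1/2})$. Hence
\begin{align*}
\max_{\substack{|T|=n\\ T~\text{strictly $d$-ary}}}\gamma(D,T) \;\geq\; \gamma(D,T^*) \;\geq\; \gamma(D,T)-\mathcal{O}(n^{-1/2}) \;\geq\; i_d(D)-\mathcal{O}(n^{-1/2}).
\end{align*}
Combining the two bounds gives the asymptotic equality; the limit statement is then immediate since the left-hand side is sandwiched between $i_d(D)-\mathcal{O}(n^{-1/2})$ and $i_d(D)+\mathcal{O}(n^{-1})$.

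There is essentially no obstacle beyond the bookkeeping: the substantive work has been done in Theorems~\ref{maxdensityId} and \ref{towards idId}. The only thing to be a little careful about is the remark already noted in the paper after Theorem~\ref{maxdensityId}, namely that deleting a leaf from a strictly $d$-ary tree typically destroys the strictly $d$-ary property, which is why the naive averaging argument cannot produce an $\mathcal{O}(n^{-1})$ error term here; the weaker $\mathcal{O}(n^{-1/2})$ rate instead comes entirely from the dangling-tree construction of Theorem~\ref{towards idId}, whose error is governed by $|T|^{-1}$ with $|T|=\Theta(n^{1/2})$.
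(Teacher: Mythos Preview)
Your proposal is correct and essentially identical to the paper's proof: both establish the upper bound via the trivial inclusion of strictly $d$-ary trees among $d$-ary trees together with Theorem~\ref{maxdensityId} and $I_d(D)=i_d(D)$, and both establish the lower bound by taking a maximizer $T$ with $\lfloor n^{1/2}\rfloor$ leaves (using that the maxima decrease to $I_d(D)$), then applying the construction of Theorem~\ref{towards idId} to produce a strictly $d$-ary $T^*$ with $n$ leaves and $\gamma(D,T^*)=\gamma(D,T)+\mathcal{O}(n^{-1/2})$. Your explicit note restricting to $n\equiv 1\pmod{d-1}$ and your remark that the upper bound is in fact $\mathcal{O}(n^{-1})$ are nice additions, but the argument itself matches the paper's.
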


\begin{proof} 
Using the identity $i_d(D)=I_d(D)$ from Theorem~\ref{towards idId} together with the first claim of Theorem~\ref{maxdensityId}, namely
\begin{align*}
\max_{\substack{|T|=n\\T~\text{strictly $d$-ary tree}}} \gamma(D,T)\leq \max_{\substack{|T|=n\\T~\text{$d$-ary tree}}} \gamma(D,T) \leq i_d(D)+\mathcal{O}\big(n^{-1}\big)\,,
\end{align*}
immediately yields 
\begin{align} \label{also}
\max_{\substack{|T|=n\\T~\text{strictly $d$-ary tree}}} \gamma(D,T) \leq i_d(D) + \mathcal{O}\big(n^{-1}\big)\,.
\end{align}
Also, the second claim of Theorem~\ref{maxdensityId} gives
\begin{align} \label{egy}
i_d(D)=I_d(D)\leq \max_{\substack{|T|=\lfloor n^{1/2}\rfloor \\T~\text{$d$-ary tree}}}  \gamma(D,T)\, ,
\end{align}
while the first claim of Theorem~\ref{towards idId} guarantees a particular strictly $d$-ary tree $T^*$ on $n$ leaves for the maximizer tree
in \eqref{egy}, such that
\begin{align} \label{ketto}
\gamma(D,T)=\gamma(D,T^*)+\mathcal{O}\big(n^{-\frac{1}{2}}\big)\,.
\end{align}
Formulae \eqref{egy} and \eqref{ketto} immediately give 
\begin{align*}
i_d(D)\leq \max_{\substack{|T'|=n\\T'~\text{strictly $d$-ary tree}}} \gamma(D,T') +\mathcal{O}\big(n^{-\frac{1}{2}}\big)\, ,
\end{align*}
which, together with \eqref{also}, completes the proof of the Corollary.
\end{proof}

\bigskip
In certain cases, we suspect a stronger asymptotic result on the maximum density in strictly $d$-ary trees \cite{ClassDossouOloryWagner}. This, in particular, happens when
\begin{align*}
\displaystyle \max_{\substack{|T|=n \\ T~\text{$d$-ary tree}}} c(D,T) 
\end{align*}
is attained by strictly $d$-ary trees for every $n\equiv 1 \mod (d-1)$. Finally, a natural question to ask at this point is the following:

\begin{question}
Can the $\mathcal{O}$-term in Corollary~\ref{lowerBoundAsymptid} be improved somewhat further for general $d$-ary trees $D$?
\end{question}

\section{Inducibility of a Binary Tree in $d$-ary Trees} \label{bin}

Our aim in this section is to compare the inducibilities of a binary tree $B$ in $d$-ary trees, for different values of $d$. It turns out that $i_d(B)$ is independent of $d$.

\begin{theorem}\label{Upper bound for all binary trees}
Every binary tree $B$ satisfies
\begin{align*}
i_d(B)= i_2(B)=I_d(B)=I_2(B)
\end{align*}
for every $d$.
\end{theorem}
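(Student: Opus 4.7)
The plan is to apply Theorem~\ref{towards idId} twice, once with $B$ viewed as a $2$-ary tree and once as a $d$-ary tree, obtaining $i_2(B) = I_2(B)$ and $i_d(B) = I_d(B)$; this reduces the four-way equality to the single claim $I_d(B) = I_2(B)$. One inequality is immediate: every binary tree is a fortiori a $d$-ary tree (its internal vertices each have two children, which lies in the allowed range $[2,d]$), so the class over which the limsup for $I_d(B)$ is taken contains the class for $I_2(B)$, giving $I_d(B) \geq I_2(B)$. The substance of the proof is therefore the reverse inequality $I_d(B) \leq I_2(B)$.

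For this, I would introduce a binarization operation $T \mapsto T^{(2)}$: given an arbitrary $d$-ary tree $T$, replace each internal vertex $v$ of $T$ of out-degree $r \geq 3$ by an arbitrarily chosen strictly binary tree $\beta_v$ on $r$ leaves, identifying these leaves with the children $c_1,\ldots,c_r$ of $v$ (so that each $c_k$'s original subtree is hung off the corresponding leaf of $\beta_v$). Then $T^{(2)}$ is a strictly binary tree with the same leaf set as $T$, in particular $|T^{(2)}| = |T|$. The core lemma is
\[ c(B, T^{(2)}) \geq c(B, T), \]
from which, by passing to densities and taking the limsup over $n$-leaf $d$-ary trees, one obtains $I_2(B) \geq I_d(B)$.

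To prove the lemma I would show that any subset $S$ of $|B|$ leaves that leaf-induces $B$ in $T$ also leaf-induces $B$ in $T^{(2)}$. The decisive observation is that because $B$ is strictly binary, at every internal vertex $v$ of the minimal subtree of $T$ containing $S$, \emph{exactly} two of $v$'s original children have a descendant in $S$---call these $c_i$ and $c_j$---since any other count would force the leaf-induced subtree at $v$ to have out-degree $\neq 2$, contradicting that it equals $B$. Inside $\beta_v$, the lowest common ancestor $m$ of $c_i$ and $c_j$ then has both of its two subtrees meeting $S$, while every other internal vertex of $\beta_v$ has exactly one of its two subtrees meeting $S$: a vertex above $m$ branches off sideways toward the subtree rooted at some $c_k \notin \{c_i, c_j\}$, which contains no $S$-leaves, and a vertex strictly between $m$ and $c_i$ (or symmetrically $c_j$) likewise branches sideways to an $S$-free region. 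Hence every vertex of $\beta_v$ other than $m$ becomes degree-$2$ after restriction to $S$ and is contracted; $\beta_v$'s net contribution to the leaf-induced subtree is the single binary branching at $m$, exactly matching the binary branching contributed by $v$ itself in $T$. Performing this analysis at every internal vertex of $T$ simultaneously shows that $T$ and $T^{(2)}$ produce identical leaf-induced subtrees on $S$.

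The main obstacle is the case analysis just sketched---in particular, cleanly justifying why every internal vertex of $\beta_v$ other than $m$ gets contracted, which rests on the fact that only two children of $v$ are `$S$-active' so that every other $c_k$-subtree is entirely $S$-free. Once the lemma is in hand, combining $I_d(B) \leq I_2(B) \leq I_d(B)$ with the two applications of Theorem~\ref{towards idId} yields the desired chain of equalities $i_d(B) = i_2(B) = I_d(B) = I_2(B)$.
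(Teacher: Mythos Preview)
Your proposal is correct and follows essentially the same strategy as the paper: construct a binary tree from a given $d$-ary tree so that every leaf set inducing a copy of $B$ still induces a copy of $B$, obtain the inequality $I_d(B)\le I_2(B)$ (the paper phrases it as $i_d(B)\le i_2(B)$, working only with strictly $d$-ary inputs), and combine with Theorem~\ref{towards idId} and the trivial containment $I_2(B)\le I_d(B)$. The only differences are in execution---the paper fixes a specific recursive caterpillar binarization and proves the counting inequality by induction on $|T|$, whereas you allow an arbitrary local replacement $\beta_v$ at each high-degree vertex and argue directly via the LCA of the two $S$-active children inside $\beta_v$; one small wording issue is that your claim ``exactly two children are $S$-active'' should refer to the internal vertices of the \emph{leaf-induced} subtree (after suppression), not of the minimal subtree, since vertices with a single $S$-active child do lie in the latter but are harmlessly contracted in both $T$ and $T^{(2)}$.
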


\begin{proof}
The following correspondence is our stepping stone for proving the assertion. Let $d\geq 3$ be fixed, and fix an arbitrary total order $\prec$ on the set of strictly $d$-ary trees. For a strictly $d$-ary tree $T$, we always order the branches $T_1,T_2,\ldots,T_d$ in such a way that
\begin{align*}
T_1 \preceq T_2 \preceq \cdots \preceq T_d\,.
\end{align*}

From the tree $T$, we build a binary tree $G(T)$ by means of the recursive algorithm depicted in Fig.~\ref{GT from T}, starting with the single leaf being invariant under $G$. More specifically, for $|T|>1$, the tree $G(T)$ is obtained as follows:
\begin{itemize}
\item draw a path on $d-1$ vertices $v_2,v_3,\ldots,v_d$ in this order ($v_2$ and $v_d$ are the endvertices of the path);
\item attach a leaf $l_i$ (by dropping a pendant edge) to every vertex $v_i$ of the path except for the lowest vertex $v_2$;
\item replace the vertex $v_2$ of the path by an internal vertex with two leaves $l_1$ and $l_2$ attached to it;
\item append the root of $G(T_i)$ to leaf $l_i$ for every $i \in \{1,2,\ldots,d\}$. The vertex $v_d$ is the root of $G(T)$.
\end{itemize}

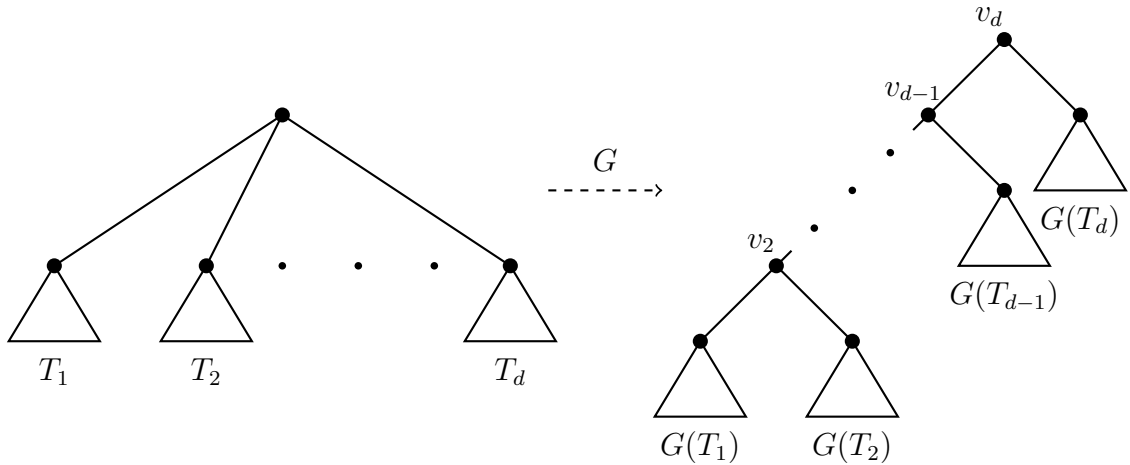
\begin{figure}[htbp]\centering
\begin{tikzpicture}[thick]

\node[fill=black,circle,inner sep=2pt] at (0,0) {};
\node[fill=black,circle,inner sep=2pt] at (2,0) {};
\node[fill=black,circle,inner sep=2pt] at (6,0) {};

\node[fill=black,circle,inner sep=1pt] at (3,0) {};
\node[fill=black,circle,inner sep=1pt] at (4,0) {};
\node[fill=black,circle,inner sep=1pt] at (5,0) {};

\node[fill=black,circle,inner sep=2pt] at (3,2) {};

\draw (-0.6,-1)--(0.6,-1)--(0,0)--cycle;
\draw (1.4,-1)--(2.6,-1)--(2,0)--cycle;
\draw (5.4,-1)--(6.6,-1)--(6,0)--cycle;

\draw (0,0)--(3,2);
\draw (2,0)--(3,2);
\draw (6,0)--(3,2);

\node at (0,-1.4) {$T_1$};
\node at (2,-1.4) {$T_2$};
\node at (6,-1.4) {$T_d$};

\node at (7.25,1.4) {$G$};
\draw [dashed,->] (6.5,1)--(8,1);

\node[fill=black,circle,inner sep=2pt] at (8.5,-1) {};
\node[fill=black,circle,inner sep=2pt] at (9.5,0) {};
\node[fill=black,circle,inner sep=2pt] at (11.5,2) {};
\node[fill=black,circle,inner sep=2pt] at (12.5,3) {};

\node[fill=black,circle,inner sep=2pt] at (10.5,-1) {};
\node[fill=black,circle,inner sep=2pt] at (12.5,1) {};
\node[fill=black,circle,inner sep=2pt] at (13.5,2) {};

\node[fill=black,circle,inner sep=1pt] at (10,0.5) {};
\node[fill=black,circle,inner sep=1pt] at (10.5,1) {};
\node[fill=black,circle,inner sep=1pt] at (11,1.5) {};

\draw (7.9,-2)--(9.1,-2)--(8.5,-1)--cycle;
\draw (9.9,-2)--(11.1,-2)--(10.5,-1)--cycle;
\draw (11.9,0)--(13.1,0)--(12.5,1)--cycle;
\draw (12.9,1)--(14.1,1)--(13.5,2)--cycle;

\draw (8.5,-1)--(9.7,0.2);
\draw (11.3,1.8)--(12.5,3);

\draw (9.5,0)--(10.5,-1);
\draw (11.5,2)--(12.5,1);
\draw (12.5,3)--(13.5,2);

\node at (9.3,0.3) {$v_2$};
\node at (11.3,2.3) {$v_{d-1}$};
\node at (12.3,3.3) {$v_d$};

\node at (8.5,-2.4) {$G(T_1)$};
\node at (10.5,-2.4) {$G(T_2)$};
\node at (12.5,-0.4) {$G(T_{d-1})$};
\node at (13.5,0.6) {$G(T_d)$};

\end{tikzpicture}
\caption{$T$ is a strictly $d$-ary tree and $G(T)$ is its corresponding binary tree under the tree-transformation $G$ described in the proof of Theorem~\ref{Upper bound for all binary trees}.}\label{GT from T}
\end{figure}
For example, $G$ maps the star $C_d$ to the binary caterpillar $F^2_d$. 

\medskip
With this transformation at hand, let us prove that the inequality
\begin{align*}
c\big(B,G(T)\big)\geq c(B,T)
\end{align*}
holds for every binary tree $B$ and every strictly $d$-ary tree $T$.

\medskip
The construction of $G(T)$ also yields a natural bijection between the leaves of $T$ and those of $G(T)$. We can show by induction that if a set of leaves of $T$ induces a copy of a binary tree $B$, then so do the corresponding leaves in $G(T)$. This is certainly true when $|T|=1$. For the induction step, we have two cases:
\begin{itemize}
\item If the leaves that induce $B$ all lie in one branch $T_i$ for some $i \in \{1,2,\ldots,d\}$, then the corresponding leaves lie in $G(T_i)$, and we are done by the induction hypothesis.
\item Otherwise, they lie in exactly two branches $T_i$ and $T_j$ for some $i \neq j \in \{1,2,\ldots,d\}$, and the two leaf sets induce the two branches of $B$. By the induction hypothesis, this is also true for the corresponding leaves in $G(T_i)$ and $G(T_j)$, and we are done again.
\end{itemize}

This shows that every subset of leaves of $T$ that induces a copy of $B$ corresponds to a unique subset of leaves of $G(T)$ that induces a copy of $B$. Therefore, there is an injection from the copies of $B$ in $T$ to the copies of $B$ in $G(T)$, and we have $c(B,T) \leq c\big(B,G(T)\big)$ as claimed.

Consequently, we arrive at
\begin{align*}
i_d(B)\leq \limsup_{\substack{|T|\to \infty \\ T~\text{strictly $d$-ary tree}}} \gamma\big(B,G(T)\big) \leq \limsup_{\substack{|T|\to \infty \\ T~\text{binary tree}}} \gamma\big(B,T\big)  = i_2(B)\,.
\end{align*}
On the other hand, we have both $i_2(B)=I_2(B)$ and $I_d(B)\geq I_2(B)$ by definition, while Corollary~\ref{lowerBoundAsymptid} gives us $i_d(B)=I_d(B)$. Hence, the statement of the theorem follows.
\end{proof}

We point out that the analogue of Theorem~\ref{Upper bound for all binary trees} for non-binary trees is not true in general: as Theorem~\ref{Cd} shows, the inducibility $i_d(C_k)$ of the $k$-leaf star is a strictly increasing function of $d$ for $k\geq 3$.

From here onwards, we shall use only $i_d(D)$. Our next section deals with some bounds on the inducibility.

\section{Some General Results} \label{generic}

Let us say something about how small $i_d(D)$ can be:

\begin{proposition}\label{howsamllJS}
Let $d\geq 2$ be an arbitrary but fixed positive integer. Every $d$-ary tree $D$ with at least two leaves satisfies
\begin{align*}
i_d(D) \geq \frac{(-1+|D|)!}{-1+|D|^{-1+|D|}}\,.
\end{align*}
In particular, every $d$-ary tree has positive inducibility.
\end{proposition}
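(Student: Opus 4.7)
The plan is to exhibit an explicit sequence of $d$-ary trees built from $D$ by iterated substitution and then appeal to Theorem~\ref{towards idId}, which identifies $i_d(D)$ with $I_d(D)$. Write $k := |D| \geq 2$. Define $T_0$ to be a single leaf and, recursively, let $T_{h+1}$ be obtained from $T_h$ by identifying each leaf of $T_h$ with the root of a fresh copy of $D$. Equivalently, by associativity of substitution, $T_{h+1}$ is $D$ with each of its $k$ leaves replaced by a copy of $T_h$. Since substitution preserves out-degrees, each $T_h$ is a $d$-ary tree with $|T_h| = k^h$ leaves.

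Let $a_h := c(D, T_h)$. The core combinatorial inequality I plan to establish is
\begin{equation*}
a_{h+1} \geq k\,a_h + k^{hk}.
\end{equation*}
The first term counts copies of $D$ whose $k$ leaves all lie inside a single one of the $k$ sub-trees (each a copy of $T_h$) hanging from the top-level $D$. The second counts the ``transversal'' selections of one leaf from each of those $k$ sub-trees: for such a transversal, the minimal subtree in $T_{h+1}$ retains the entire top-level $D$ (each leaf of $D$ is an ancestor of a chosen leaf), while below each former leaf of $D$ it consists of a single path that collapses to one edge after degree-$2$ suppression, so the leaf-induced subtree is precisely $D$.

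Using $\binom{k^h}{k} = k^{hk}/k! \cdot (1 + \mathcal{O}(k^{-h}))$, the normalized sequence $g_h := k!\,a_h/k^{hk}$ then satisfies
\begin{equation*}
g_{h+1} \geq \frac{g_h}{k^{k-1}} + \frac{k!}{k^k},
\end{equation*}
which is a contraction with unique fixed point $g^* = (k-1)!/(k^{k-1}-1)$ (the multiplier $k^{1-k}$ is strictly less than $1$ for $k \geq 2$). Iterating forces $\liminf_h g_h \geq g^*$, hence $\limsup_h \gamma(D, T_h) \geq g^*$, giving $I_d(D) \geq g^*$; Theorem~\ref{towards idId} upgrades this to $i_d(D) \geq g^*$, which is the desired bound. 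The main obstacle is the clean verification that every transversal selection really induces $D$ after the suppression step, and that the transversal copies are disjoint from the within-a-subtree copies; the rest reduces to a one-dimensional contraction-mapping argument.
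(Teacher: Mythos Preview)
Your proposal is correct and follows essentially the same approach as the paper: both construct the iterated self-substitution $T_h$ of $D$, derive the same recursive inequality $a_{h+1}\ge k\,a_h+k^{hk}$, and pass to the limit (the paper sums the resulting geometric series explicitly, whereas you phrase the same computation as a contraction to its fixed point $g^*=(k-1)!/(k^{k-1}-1)$). The appeal to Theorem~\ref{towards idId} to pass from $I_d(D)$ to $i_d(D)$ is likewise implicit in the paper's argument, since the trees $T_h$ are $d$-ary but not in general strictly $d$-ary.
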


\begin{proof}
We employ a tree-construction similar to the one used for proving that $i_d(D)=I_d(D)$. Fix $d\geq 2$. For two strictly $d$-ary trees $S_1,S_2$, denote by $\mathcal{F}(S_1;S_2)$ the unique strictly $d$-ary tree that is formed by appending the root of $S_2$ to every leaf of $S_1$. See Fig.~\ref{TreeFS1S2} for a picture. So we have $|\mathcal{F}(S_1;S_2)|=|S_1|\cdot |S_2|$.

\begin{figure}[htbp]\centering
\begin{tikzpicture}[thick]

\node[fill=black,circle,inner sep=2pt] at (0,0) {};
\node[fill=black,circle,inner sep=2pt] at (4,0) {};
\node[fill=black,circle,inner sep=2pt] at (2,2) {};

\node[fill=black,circle,inner sep=2pt] at (-1,-1.73) {};
\node[fill=black,circle,inner sep=2pt] at (1,-1.73) {};
\node[fill=black,circle,inner sep=2pt] at (3,-1.73) {};
\node[fill=black,circle,inner sep=2pt] at (5,-1.73) {};

\node[fill=black,circle,inner sep=1pt] at (1.4,0) {};
\node[fill=black,circle,inner sep=1pt] at (1.8,0) {};
\node[fill=black,circle,inner sep=1pt] at (2.2,0) {};
\node[fill=black,circle,inner sep=1pt] at (2.6,0) {};

\node at (2,0.8) {$S_1$};
\node at (0,-1.2) {$S_2$};
\node at (4,-1.2) {$S_2$};

\draw (1.4,0)--(0,0)--(2,2)--(4,0)--(2.6,0);

\draw (-1,-1.73)--(1,-1.73)--(0,0)--cycle;
\draw (3,-1.73)--(5,-1.73)--(4,0)--cycle;

\node at (2,-2.2) {$\mathcal{F}(S_1;S_2)$};

\node at (-0.2,0.3) {$1$};
\node at (4.4,0.3) {$|S_1|$};

\end{tikzpicture}
\caption{A rough picture of the tree $\mathcal{F}(S_1;S_2)$ defined in the proof of Proposition~\ref{howsamllJS}.} \label{TreeFS1S2}
\end{figure}
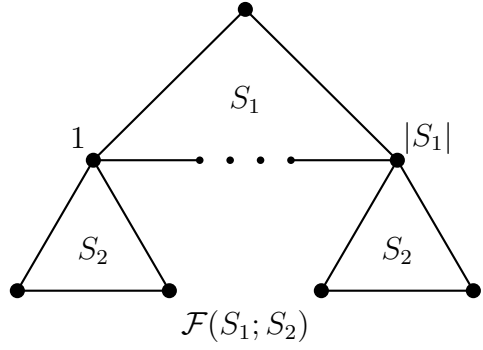

Starting with $T^{[1]}$ being the single leaf, we define recursively the family $T^{[n+1]}=\mathcal{F}(D;T^{[n]})$ of strictly $d$-ary trees. It is clear that  $T^{[2]}=D$ and  $ |T^{[n]}|=|D|^{-1+n}$. In this fashion, there are $|D| \cdot c(D,T^{[n]})$ copies of $D$ in the $|D|$ copies of $T^{[n]}$ that are attached to the leaves of $D$ to obtain $T^{[n+1]}$. Also, if one takes one arbitrary leaf from each of these $|D|$ copies, one obtains a total of another $|T^{[n]}|^{|D|}$ copies of $D$. Thus, we get
\begin{align*}
c(D,T^{[1+n]}) \geq |D| \cdot c(D,T^{[n]}) + |T^{[n]}|^{|D|}\,,
\end{align*}
which gives
\begin{align*}
c(D,T^{[1+n]}) \geq \sum_{j=0}^{n-1} |D|^j \cdot |T^{[-j+n]}|^{|D|}
\end{align*}
by iteration % induction on $n$ (starting with $n=1$) 
 as $c(D, T^{[1]}) =0$. Since $ |T^{[-j+n]}|=|D|^{-1-j+n}$, we establish the inequality
\begin{align*}
c(D,T^{[n]}) \geq |D|^{(-2+n)|D|} \cdot \sum_{j=0}^{n-2} |D|^{j(1-|D|)}\,,
\end{align*}
which becomes
\begin{align*}
 \gamma\big(D,T^{[n]}\big) \geq  \frac{|D|^{(-1+n)|D|} - |D|^{-1+n}}{(|D|^{|D|} -|D|) \binom{|D|^{-1+n}}{|D|}}\,.
\end{align*}
Letting $n \to \infty$ produces
\begin{align*}
\limsup_{n\to \infty} \gamma\big(D,T^{[n]}\big) \geq \frac{(-1+|D|)!}{-1+|D|^{-1+|D|}}
\end{align*}
and, in particular, the assertion of the proposition. 
\end{proof}

The bound obtained in Proposition~\ref{howsamllJS} seems to be weak in general. Possibly, $C_d$ is even the only $d$-ary tree that attains this bound (see Theorem~\ref{Cd}).

\medskip
The tree $\mathcal{F}(S_1;S_2)$ (as constructed in the proof of Proposition~\ref{howsamllJS}) offers another special result:

\begin{theorem}
Let $d\geq 2$ be an arbitrary but fixed positive integer. For two $d$-ary trees $S_1,S_2$, let $\mathcal{F}(S_1;S_2)$ be as defined in the proof of Proposition~\ref{howsamllJS}. Then we have
\begin{align*}
i_d\big(\mathcal{F}(S_1;S_2) \big) \geq \frac{(|S_1|\cdot |S_2|)!}{(|S_2|!)^{|S_1|}\cdot |S_1|^{|S_1|\cdot |S_2|}} \cdot \big(i_d(S_2)\big)^{|S_1|}\,.
\end{align*}
\end{theorem}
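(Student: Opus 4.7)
The plan is to construct an explicit family of strictly $d$-ary trees whose density of $\mathcal{F}(S_1;S_2)$ realizes the claimed lower bound in the limit. Write $k_1=|S_1|$ and $k_2=|S_2|$. By the definition of $i_d(S_2)$ together with Theorem~\ref{towards idId}, fix a sequence $(T_n)$ of strictly $d$-ary trees with $|T_n|=n$ for which $\gamma(S_2,T_n)\to i_d(S_2)$ along some subsequence. Put $U_n:=\mathcal{F}(S_1;T_n)$, a strictly $d$-ary tree with $|U_n|=k_1 n$ leaves.

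The core step is the counting inequality
\begin{align*}
c\bigl(\mathcal{F}(S_1;S_2),U_n\bigr)\;\geq\;c(S_2,T_n)^{k_1}.
\end{align*}
To prove it, independently pick in each of the $k_1$ attached copies of $T_n$ inside $U_n$ a $k_2$-leaf subset inducing $S_2$; distinct tuples of choices obviously give distinct $k_1 k_2$-element leaf subsets of $U_n$. The essential structural claim is that every such subset $L$ induces a copy of $\mathcal{F}(S_1;S_2)$. Since $L$ contains leaves from every block, the MRCA of $L$ is the root of the top $S_1$-skeleton, and each internal vertex of $S_1$ retains all of its children in the minimal subtree, so the top of the extracted tree reproduces $S_1$ exactly. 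Within the $i$-th block, if the within-block MRCA $r_i$ coincides with the block root $\ell_i$, the $i$-th attached $S_2$ appears directly beneath $\ell_i$; otherwise the path from $\ell_i$ down to $r_i$ consists entirely of degree-$2$ vertices in the minimal subtree and is suppressed by definition, producing an $S_2$ hanging directly below the parent of $\ell_i$ in $S_1$. Either way the resulting tree is isomorphic to $\mathcal{F}(S_1;S_2)$.

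Finally, pass to densities by dividing through by $\binom{k_1 n}{k_1 k_2}$. Writing $c(S_2,T_n)=\gamma(S_2,T_n)\binom{n}{k_2}$ and using the elementary asymptotic
\begin{align*}
\frac{\binom{n}{k_2}^{k_1}}{\binom{k_1 n}{k_1 k_2}}\;\xrightarrow[n\to\infty]{}\;\frac{(k_1 k_2)!}{(k_2!)^{k_1}\, k_1^{k_1 k_2}},
\end{align*}
which follows instantly from $\binom{m}{\ell}\sim m^{\ell}/\ell!$, one obtains
\begin{align*}
\gamma\bigl(\mathcal{F}(S_1;S_2),U_n\bigr)\;\geq\;\gamma(S_2,T_n)^{k_1}\cdot\frac{(k_1 k_2)!}{(k_2!)^{k_1}\,k_1^{k_1 k_2}}\cdot\bigl(1+o(1)\bigr).
\end{align*}
Taking $\limsup$ as $n\to\infty$ and using $\gamma(S_2,T_n)\to i_d(S_2)$ delivers the claimed inequality for $i_d\bigl(\mathcal{F}(S_1;S_2)\bigr)$. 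The only delicate point is the structural verification above, i.e.\ confirming that the suppression of degree-$2$ vertices converts the raw extracted tree into $\mathcal{F}(S_1;S_2)$ regardless of where each block's within-block MRCA happens to sit; this is where care with the definition of the leaf-induced subtree matters.
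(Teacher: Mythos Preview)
Your argument is essentially the paper's own: build $\mathcal{F}(S_1;T)$, use the counting inequality $c(\mathcal{F}(S_1;S_2),\mathcal{F}(S_1;T))\geq c(S_2,T)^{|S_1|}$, pass to densities, and let $|T|\to\infty$; you simply spell out the structural verification that the paper leaves as ``easy to see.'' One small slip: $U_n=\mathcal{F}(S_1;T_n)$ need not be \emph{strictly} $d$-ary when $S_1$ is merely $d$-ary, but this is harmless since $I_d=i_d$ by Theorem~\ref{towards idId}, which you already invoke.
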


\begin{proof}
Fix $d\geq 2$. For a $d$-ary tree $T$, it is easy to see that a copy of $\mathcal{F}(S_1;S_2)$ in $\mathcal{F}(S_1;T)$ can be obtained by taking one copy of $S_2$ in $T$ for each of the $|S_1|$ copies of $T$ planted in $\mathcal{F}(S_1;T)$. Thus, the inequality
\begin{align*}
c\big(\mathcal{F}(S_1;S_2),\mathcal{F}(S_1;T) \big) \geq c\big(S_2,T\big)^{|S_1|}
\end{align*}
is valid so that we obtain
\begin{align*}
\max_{\substack{|T^{\prime}|=|S_1| \cdot |T| \\ T^{\prime}~\text{$d$-ary tree}}} \frac{c\big(\mathcal{F}(S_1;S_2),\mathcal{F}(S_1;T^{\prime}) \big)}{\binom{|S_1| \cdot |T|}{|S_1| \cdot |S_2|}} \geq  \frac{\binom{|T|}{|S_2|}^{|S_1|}}{\binom{|S_1| \cdot |T|}{|S_1| \cdot |S_2|}} \cdot \gamma\big(S_2,T\big)^{|S_1|}\,.
\end{align*}
Now we choose $T$ in such a way that
\begin{align*}
|T|=n,~c(S_2,T)=\max_{\substack{|T^{\prime}|=n\\T^{\prime}~\text{$d$-ary tree}}} c\big(S_2,T^{\prime}\big)\,.
\end{align*}
Letting $n \to \infty$ yields 
\begin{align*}
\limsup_{n \to \infty}\max_{\substack{|T^{\prime}|=|S_1| \cdot n \\ T^{\prime}~\text{$d$-ary tree}}}
\gamma\big(\mathcal{F}(S_1;S_2),\mathcal{F}(S_1;T^{\prime}) \big) \geq \frac{(|S_1|\cdot |S_2|)!}{(|S_2|!)^{|S_1|}\cdot |S_1|^{|S_1|\cdot |S_2|}} \cdot \big(i_d(S_2)\big)^{|S_1|}\,.
\end{align*}
Consequently, we obtain the desired inequality.
\end{proof}

In some special cases, there is an upper bound that asymptotically matches the lower bound on $i_d\big(\mathcal{F}(S_1;S_2) \big)$ as $|\mathcal{F}(S_1;S_2)|$ gets large---see \cite{ClassDossouOloryWagner}.

\section{The Trees with the Maximal Inducibility} \label{maxim}

The first natural question to pose regarding a graph invariant concerns its extreme values and the extremal graphs. In our case, we have already proved that every tree has positive inducibility (Proposition~\ref{howsamllJS}) and that $i_d(F^2_k)=1$ for every $k$ and every $d$ (Theorem~\ref{inducibility of the binary caterpillar is 1 in d ary trees}). But besides binary caterpillars, are there other $d$-ary trees with inducibility $1$? The answer turns out to be negative. This extends the result for binary trees in~\cite{czabarka2016inducibility}.

\begin{theorem}\label{In d ary trees, binary caterpillars always form a positive proportion of all trees induced by $k$ leaves in the limit}
Let $d\geq 2$ be an arbitrary but fixed positive integer. Among $d$-ary trees, only binary caterpillars have inducibility $1$.
\end{theorem}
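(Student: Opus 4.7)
The plan is to split into two cases according to whether $D$ is binary.

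In Case~1 ($D$ is binary and $D\ne F^2_k$), Theorem~\ref{Upper bound for all binary trees} gives $i_d(D)=i_2(D)$, and the binary version of the present theorem (established in~\cite{czabarka2016inducibility}) provides $i_2(D)<1$; hence $i_d(D)<1$ immediately.

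In Case~2 ($D$ is not binary), some internal vertex of $D$ has $r\ge 3$ children, so $C_3$ arises as a leaf-induced subtree of $D$. I would attack this case by strong induction on $|D|$, via the following double-counting / sub-sampling inequality: for any sub-pattern $D'$ of $D$ on $k'<k=|D|$ leaves, a double-count on pairs (copy of $D$ in $T$, $k'$-subset of its leaves inducing $D'$) yields
\[
\gamma(D,T)\cdot m(D,D') \;\le\; \gamma(D',T)\cdot \binom{k}{k'},
\]
where $m(D,D')$ counts the $k'$-subsets of leaves of $D$ that induce $D'$. Writing $p_{D,D'}=m(D,D')/\binom{k}{k'}$ and taking $\limsup$ gives $i_d(D)\le i_d(D')/p_{D,D'}$; dually, $\gamma(D,T_n)\to 1$ along a sequence of strictly $d$-ary trees forces $\gamma(D',T_n)\to p_{D,D'}$, and hence $p_{D,D'}\le i_d(D')$ for every sub-pattern $D'$. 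Plugging $D'=C_3$ together with Theorem~\ref{Cd} yields $i_d(D)\le ((d-2)/(d+1))/p_{D,C_3}$, which is $<1$ whenever $p_{D,C_3}>(d-2)/(d+1)$; this disposes of all ``bushy'' non-binary $D$ at once.

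\textbf{The main obstacle} is the opposite extreme: \emph{skinny} non-binary $D$ that looks almost like a binary caterpillar with a single isolated degree-$\ge 3$ vertex buried deep inside, so $p_{D,C_3}=O(1/k^3)$ and the $C_3$-bound above is useless. For these I would apply the sub-sampling inequality with a non-caterpillar sub-pattern $\widetilde D$ of $D$ on $k-1$ leaves (for instance obtained by deleting a single spine leaf of $D$, which keeps the non-binary vertex intact and makes $p_{D,\widetilde D}$ close to $1$), invoking $i_d(\widetilde D)<1$ from the induction hypothesis. Closing the induction requires propagating a \emph{quantitative} bound of the form $i_d(\widetilde D)\le 1-c(\widetilde D)$ strong enough to survive the multiplier $k/(k-3)$ appearing in the iteration, together with a direct treatment of the small base cases (the handful of non-caterpillar $d$-ary trees on $4$ leaves). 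Arranging the quantitative induction self-consistently is where the technical difficulty lies.
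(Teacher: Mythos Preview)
Your Case~1 is fine, but Case~2 is not a proof: you yourself isolate the obstacle (skinny non-binary $D$ with $p_{D,C_3}=O(k^{-2})$) and then propose a quantitative induction without carrying it out or even arguing that it can be closed. The difficulty is real. To make the step $i_d(D)\le i_d(\widetilde D)/p_{D,\widetilde D}$ yield $i_d(D)<1$ you need $p_{D,\widetilde D}>i_d(\widetilde D)$, i.e.\ a lower bound on $1-i_d(\widetilde D)$ that beats $1-p_{D,\widetilde D}$. Along a chain of such reductions the sub-pattern $\widetilde D$ stays ``almost a caterpillar,'' so $i_d(\widetilde D)$ may be very close to~$1$, and there is no mechanism in your outline that prevents the accumulated losses from swallowing the gap. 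As written, this is a plan with a hole, not a proof.

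The paper's argument sidesteps all of this with a single elementary observation and the monotonicity already established in Theorem~\ref{maxdensityId}. A $d$-ary tree of height at most $h$ has at most $d^h$ leaves, so any $d$-ary tree with more than $d^{k-2}$ leaves has height at least $k-1$; picking one leaf off each vertex of a root-to-leaf path of that length produces a copy of $F^2_k$. Hence for any $D\neq F^2_k$ with $|D|=k$ and any fixed $n_0>d^{k-2}$,
\[
\max_{\substack{|T|=n_0\\T\ \text{$d$-ary}}}\gamma(D,T)<1,
\]
since at least one $k$-subset of leaves induces $F^2_k$ rather than $D$. Because $n\mapsto\max_{|T|=n}\gamma(D,T)$ is nonincreasing (the second part of Theorem~\ref{maxdensityId}), it follows that $i_d(D)\le\max_{|T|=n_0}\gamma(D,T)<1$. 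No case split on whether $D$ is binary, no sub-sampling inequality, no induction on $|D|$.
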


\begin{proof}
Fix $d\geq 2$ and let us prove that for any fixed positive integers $k>1$ and $n>d^{k-2}$, every $d$-ary tree with $n$ leaves contains a copy of the $k$-leaf binary caterpillar $F^2_k$.

%We argue by contradiction.
The key observation is that if a $d$-ary tree has height at least $k-1$, then it must have $F^2_k$ as a leaf-induced subtree. So fix $k>1$, $n>d^{k-2}$, and consider a $d$-ary tree $T$ with $n$ leaves. It is easy to see by induction on $h$ that for every fixed integer $h\geq 0$, a $d$-ary tree with height at most $h$ can never have more than $d^h$ leaves. Since $T$ has more than $d^{k-2}$ leaves, its height must be at least $k-1$, so it contains $F^2_k$. In conclusion, for $k>1$, every $n$-leaf $d$-ary tree must contain a copy of the binary caterpillar $F^2_k$ as soon as $n>d^{k-2}$.

\medskip
Now, consider a $d$-ary tree $D$ that has at least three leaves. If $D$ is different from $F^2_{|D|}$, then we obtain $c(D,T)<\binom{|T|}{|D|}$ for every $d$-ary tree $T$ with $n$ leaves provided that $n>d^{k-2}$. Therefore, fixing $n>d^{k-2}$, we get
\begin{align*}
\max_{\substack{|T|=n\\ T~\text{$d$-ary tree}}} \gamma(D,T) <1\,.
\end{align*}
On the other hand, from the second part of Theorem~\ref{maxdensityId}, we have
\begin{align*}
i_d(D)\leq \max_{\substack{|T|=n\\ T~\text{$d$-ary tree}}} \gamma(D,T)\,,
\end{align*}
which now completes the proof of the theorem.
\end{proof}

As one would expect, the limit
\begin{align*}
\liminf_{\substack{|T| \to \infty \\ T~\text{$d$-ary tree}}} \gamma(F^2_k,T)
\end{align*}
is positive for every $d$ and every $k$. An exact formula is available in \cite{ClassDossouOloryWagner}.

\section{Concluding Comments}

It seems appropriate to close with further problems on the inducibility of $d$-ary trees.

Since we already have a general lower bound on $i_d(D)$, it would be nice to understand the following problem:
\begin{problem}
Given positive integers $d\geq 2$ and $k\geq 5$, find
\begin{align*}
\min_{\substack{|D|=k\\D~\text{$d$-ary tree}}} i_d(D)\,,
\end{align*}
and furthermore, characterize the $d$-ary trees that attain this minimum.
\end{problem}

We conjecture that except for binary trees, the inducibility in $d$-ary trees always depends on $d$ (in contrast to binary trees, where it does not; see Theorem~\ref{Upper bound for all binary trees} and the discussion thereafter).

\begin{conjecture}
Let $d\geq 2$ be an arbitrary but fixed positive integer. Among all $d$-ary trees $D$, only binary trees satisfy
\begin{align*}
i_d(D)=i_{d+1}(D)=i_{d+2}(D)=\cdots.
\end{align*}
\end{conjecture}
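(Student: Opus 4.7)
The conjecture asserts that for any non-binary $d$-ary tree $D$ (that is, $D$ has at least one internal vertex with $\geq 3$ children), the sequence $\bigl(i_r(D)\bigr)_{r\geq d}$ is not eventually constant. My plan is to establish the stronger claim that this sequence is strictly increasing in $r$ (from $r\geq k$, where $k$ is the maximum outdegree among internal vertices of $D$, so $k\geq 3$).

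The first ingredient is monotonicity. Every $r$-ary tree is also an $(r+1)$-ary tree, so $I_r(D)\leq I_{r+1}(D)$; combined with Theorem~\ref{towards idId}, which gives $i_r(D)=I_r(D)$ for every $r\geq 2$, this yields $i_r(D)\leq i_{r+1}(D)$. Hence the entire content of the conjecture is the strict inequality for non-binary $D$.

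To produce strict improvement, the plan is to work with a continuous/probabilistic reformulation. Given a rooted weighted topology $\mathcal{W}$ (an abstract tree in which every internal vertex has between $2$ and $r$ children and carries a probability distribution over them), define $\rho_D(\mathcal{W})$ to be the probability that $|D|$ i.i.d.\ random leaves, drawn according to the product measure determined by the weights, induce a copy of $D$. An adaptation of the dangling-subtree construction used in the proof of Theorem~\ref{towards idId} should yield
\begin{align*}
i_r(D)=\sup_{\mathcal{W}}\rho_D(\mathcal{W}),
\end{align*}
where the supremum is over weighted $r$-ary topologies. This rigorously justifies thinking of $i_r(D)$ as an optimum over a compact-like space of weighted shapes, so we may reason about extremal or near-extremal configurations. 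As a sanity check, for $D=C_k$ the optimum is realized by the uniform weights $(1/r,\ldots,1/r)$ at every internal vertex, giving exactly the formula of Theorem~\ref{Cd}, which explicitly verifies the conjecture in the star case.

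The heart of the plan is to prove a structural lemma: because $D$ has a vertex of outdegree $\geq 3$, any near-extremal weighted $r$-ary topology $\mathcal{W}$ must use at least three positive-weight children at some internal vertex; and moreover, splitting any positive-weight branch into two strictly positive sub-branches (which is allowed only once we pass from $r$ to $r+1$) strictly increases $\rho_D$. The intuition is that $|D|$ i.i.d.\ leaves now have a new way of realizing the degree-$k$ vertex of $D$ by placing leaves into the newly split branches. Implementing this via a Lagrange-multiplier / perturbation argument at a single internal vertex $v$ of $\mathcal{W}$, holding the subtopology rooted at $v$ and elsewhere fixed, should show that the directional derivative of $\rho_D$ with respect to introducing one new branch at $v$ is strictly positive whenever $D$'s branching structure exceeds $2$.

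The main obstacle is this structural lemma. While the star case is handled by Muirhead's inequality in a very clean way (Theorem~\ref{Cd}), the global optimum of $\rho_D$ for an arbitrary non-binary $D$ need not be attained by a finite topology and is in general unknown. The delicate point is to rule out that the supremum for both $r$ and $r+1$ is attained by the same weighted topology in which the additional degree of freedom is simply unused. A possible refinement is to induct on the structure of $D$: split $D$ at its root into branches, write $\rho_D$ as a polynomial in the branch-level probabilities, and use the inductive strict improvement for each branch together with a direct argument at the root (generalizing Muirhead's inequality from the $C_k$ case). If this global approach proves too delicate, a fallback is to establish the conjecture for the broad subclass of trees $D$ all of whose internal vertices have the same outdegree $k\geq 3$, which should already follow by repeating the Muirhead-style argument of Theorem~\ref{Cd} at every level of the extremal self-similar topology.
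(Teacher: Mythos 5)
First, a point of order: the statement you were asked about is an open \emph{conjecture} in the paper --- the authors offer no proof of it, so there is no proof to compare yours against, and your proposal must stand entirely on its own. It does not: it is a research program rather than a proof, and you concede as much in your own final paragraph. The parts that are solid are the easy ones. Monotonicity $i_r(D)\leq i_{r+1}(D)$ does follow correctly from $I_r(D)\leq I_{r+1}(D)$ (every $r$-ary tree is $(r+1)$-ary) together with Theorem~\ref{towards idId}, so the conjecture correctly reduces to a strict inequality somewhere in the chain; and the star case is indeed settled by Theorem~\ref{Cd}, whose proof of the Claim shows the relevant derivative is strictly positive, hence strict increase for $k\geq 3$.

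The genuine gaps are two. First, the identity $i_r(D)=\sup_{\mathcal{W}}\rho_D(\mathcal{W})$ over weighted topologies is asserted (``should yield'') but not proved. The lower bound $i_r(D)\geq\rho_D(\mathcal{W})$ can plausibly be extracted from the dangling-tree construction of Theorem~\ref{towards idId}, but the upper bound requires extracting a limiting weighted topology from a sequence of near-extremal finite trees (a compactness/diagonal argument over shapes of unbounded depth), and the supremum need not be attained --- which directly undercuts your stated plan to ``reason about extremal or near-extremal configurations'' via a Lagrange-multiplier perturbation at a single vertex. Second, and decisively, your structural lemma \emph{is} the conjecture, restated in analytic language: the claim that the directional derivative of $\rho_D$ with respect to opening a new branch is strictly positive whenever $D$ is non-binary is exactly what must be proved, and your sketch supplies no mechanism forcing positivity. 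Note that for binary $D$ this derivative can vanish identically (Theorem~\ref{Upper bound for all binary trees} shows $i_d(B)$ is independent of $d$), so any correct argument must pinpoint precisely where non-binarity of $D$ enters; your sketch never does, and the scenario you yourself flag --- the optimum for $r+1$ simply not using the extra branch --- is never ruled out. The fallback via ``Muirhead at every level'' also does not go through as stated: the clean symmetric decomposition in the proof of Theorem~\ref{Cd} is special to stars, where the cross-branch contribution is the single symmetric monomial $\sum\prod_j n_{i_j}$; for a general $D$ with all outdegrees equal to $k$, the root decomposition of $c(D,T)$ mixes counts of the (generally non-isomorphic) branches of $D$ distributed asymmetrically among the branches of $T$, which Muirhead's inequality does not control, and the extremal trees for such $D$ are not known to be self-similar (the paper defers exactly this kind of question to work in preparation). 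In short: the reduction to strict monotonicity is fine, but the core of the argument is missing, so the conjecture remains open under your proposal.
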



\begin{thebibliography}{10}


\bibitem{balogh2016maximum}
J{\'o}zsef~Balogh, Ping~Hu, Bernard~Lidick{\`y}, and Florian~Pfender.
\newblock Maximum density of induced 5-cycle is achieved by an iterated blow-up of 5-cycle.
\newblock {\em European J. Combinatorics}, 52:47--58, 2016.


\bibitem{bollobas1995maximal}
B{\'e}la~Bollob{\'a}s, Yoshimi~Egawa, Andrew~Harris, and Guoping~Jin.
\newblock The maximal number of induced r-partite subgraphs.
\newblock {\em Graphs and Combinatorics}, 11(1):1--19, 1995.

\bibitem{bollobas1986maximal}
B{\'e}la~Bollob{\'a}s, Chi{\^e}~Nara, and Shun-ichi~Tachibana.
\newblock The maximal number of induced complete bipartite graphs.
\newblock {\em Discrete Mathematics}, 62(3):271--275, 1986.


\bibitem{brown1994inducibility}  
Jason~I.~Brown and Alexander~Sidorenko.
\newblock The inducibility of complete bipartite graphs.
\newblock {\em J. Graph Theory}, 18(6):629--645, 1994.

\bibitem{bubeck2016local}
S{\'e}bastien~Bubeck and Nati~Linial.
\newblock On the local profiles of trees.
\newblock {\em J. Graph Theory}, 81(2):109--119, 2016.


\bibitem{czabarka2016inducibility}
{\'E}va~Czabarka, L{\'a}szl{\'o}~ A.~Sz{\'e}kely, and Stephan~Wagner.
\newblock Inducibility in binary trees and crossings in random tanglegrams.
\newblock {\em SIAM J. Discrete Mathematics}, 31(3):1732--1750, 2017.

\bibitem{ClassDossouOloryWagner}
Audace~A.~V.~Dossou-Olory and Stephan~Wagner.
\newblock Further results on the inducibility of $d$-ary trees.
\newblock In preparation.

\bibitem{TowardsDossouOloryWagner}
Audace~A.~V.~Dossou-Olory and Stephan~Wagner.
\newblock Inducibility of topological trees.
\newblock In preparation.

\bibitem{even2015note}
Chaim~Even-Zohar, Nati~Linial.
\newblock A note on the inducibility of 4-vertex graphs.
\newblock {\em Graphs and Combinatorics}, 31(5):1367--1380, 2015.

\bibitem{exoo1986dense}
Geoffrey~Exoo.
\newblock Dense packings of induced subgraphs.
\newblock {\em Ars Combinatoria}, 22:5--10, 1986.

\bibitem{hafner1988phylogenetic}
Mark~S.~Hafner and Steven~A.~Nadler.
\newblock Phylogenetic trees support the coevolution of parasites and their hosts.
\newblock {\em Nature}, 332:258--259, 1988.

\bibitem{hardy1952inequalities}
Godfrey~H.~Hardy, John~E.~Littlewood, and George~P{\'o}lya.
\newblock {\em Inequalities}.
\newblock Cambridge University Press, 1952.

\bibitem{hatami2014inducibility}
Hamed~Hatami, James~Hirst, and Serguei~Norine.
\newblock The inducibility of blow-up graphs.
\newblock {\em J. Combinatorial Theory, Ser. B}, 109:196--212, 2014.

\bibitem{hirst2014inducibility}
James~Hirst.
\newblock The inducibility of graphs on four vertices.
\newblock {\em J. Graph Theory}, 75(3): 231--243, 2014.

\bibitem{huang2014maximum}
Hao~Huang.
\newblock On the maximum induced density of directed stars and related problems.
\newblock {\em SIAM J. Discrete Mathematics}, 28(1):92--98, 2014.

\bibitem{pippenger1975inducibility}
Nicholas~Pippenger and Martin~C.~Golumbic.
\newblock The inducibility of graphs.
\newblock {\em J. Combinatorial Theory Ser. B}, 19(3):189--203, 1975.

\bibitem{semple2003phylogenetics}
Charles~Semple and Mike~A~Steel.
\newblock {\em Phylogenetics}.
\newblock Oxford University Press, 2003.

\bibitem{sperfeld2011inducibility}
Konrad~Sperfeld.
\newblock The inducibility of small oriented graphs.
\newblock arXiv preprint arXiv:1111.4813, 2011.


\end{thebibliography}
\end{document}